\newtheorem{theorem}{Theorem}
\newtheorem{lemma}[theorem]{Lemma}
\newtheorem{corollary}[theorem]{Corollary}
\newtheorem{result}[theorem]{Result}
\def\PG{\mathrm{PG}}
\def\F{\mathbb{F}_q}
\title{A proof of the linearity conjecture for $k$-blocking sets in $\PG(n,p^3)$, $p$ prime}
\author{M. Lavrauw \thanks{This author's research was supported by the Fund for Scientific Research - Flanders (FWO)}
\and L. Storme \and G. Van de Voorde $^*$}
\date{}
\begin{document}
\maketitle
\begin{abstract} In this paper, we show that a small minimal $k$-blocking set in $\PG(n,q^3)$, $q=p^h$, $h\geq 1$, $p$ prime, $p\geq 7$, intersecting every $(n-k)$-space in $1\pmod{q}$ points, is linear. As a corollary, this result shows that all small minimal $k$-blocking sets in $\PG(n,p^3)$, $p$ prime, $p\geq 7$, are $\mathbb{F}_p$-linear, proving the linearity conjecture (see \cite{sziklai}) in the case $\PG(n,p^3)$, $p$ prime, $p\geq 7$.
\end{abstract}


\section{Introduction and preliminaries}


Throughout this paper $q=p^h$, $p$ prime, $h\geq 1$ and
$\PG(n,q)$ denotes the $n$-dimensional projective space over the finite field $\F$ of order $q$.
A \emph{$k$-blocking set} $B$ in $\PG(n,q)$ is a set of points  such that any $(n-k)$-dimensional subspace intersects $B$. A $k$-blocking set $B$ is called {\em trivial} when a $k$-dimensional subspace is contained in $B$. If an $(n-k)$-dimensional space contains exactly one point of a $k$-blocking set $B$ in $\PG(n,q)$, it is called a {\em tangent $(n-k)$-space} to $B$. A $k$-blocking set $B$ is called {\em minimal} when no proper subset of $B$ is a $k$-blocking set. A $k$-blocking set $B$ is called {\em small} when $\vert B \vert <3(q^k+1)/2$.

Linear blocking sets were first introduced by Lunardon \cite{L1} and can be defined in several equivalent ways. 

In this paper, we follow the approach described in \cite{lavrauw2001}. In order to define a linear $k$-blocking set in this way, we introduce the notion of a Desarguesian spread.
Suppose $q=q_0^t$, with $t\geq 1$. By "field reduction", the points of $\PG(n,q)$ correspond to $(t-1)$-dimensional subspaces of $\PG((n+1)t-1,q_0)$, since a point of $\PG(n,q)$ is a $1$-dimensional vector space over ${\mathbb F}_q$, and so a $t$-dimensional vector space over ${\mathbb F}_{q_0}$. In this way, we obtain a partition ${\mathcal D}$ of the pointset of $\PG((n+1)t-1,q_0)$ by $(t-1)$-dimensional subspaces. In general, a partition of the point set of a projective space by subspaces of a given dimension $d$ is called a {\it spread}, or a {\it $d$-spread} if we want to specify the dimension. The spread obtained by field reduction is called a {\it Desarguesian spread}. Note that the Desarguesian spread satisfies the property that each subspace spanned by spread elements is partitioned by spread elements. 

Let $\mathcal{D}$ be the Desarguesian $(t-1)$-spread of $\PG((n+1)t-1,q_0)$. If $U$ is a subset of $\PG((n+1)t-1,q_0)$, then we define
$\mathcal{B}(U):=\lbrace R \in \mathcal{D}||U\cap R \neq \emptyset \rbrace$, 
and we identify the elements of $\mathcal{B}(U)$ with the corresponding points of $\PG(n,q_0^t)$.
If $U$ is subspace of $\PG((n+1)t-1,q_0)$, then we call ${\mathcal B}(U)$ a {\it linear set} or an {\it ${\mathbb F}_{q_0}$-linear set} if we want to specify the underlying field. %
Note that through every point in $\mathcal{B}(U)$, there is a subspace $U'$ 
 such that $\mathcal{B}(U')=\mathcal{B}(U)$ since the elementwise stabiliser of the Desarguesian spread $\mathcal{D}$ acts transitively on the points of a spread element of $\mathcal{D}$. %
 If $U$ intersects the elements of $\mathcal D$ in at most a point, i.e. $|B(U)|$ is maximal, then we say that $U$ is {\it scattered} with respect to $\mathcal D$; in this case ${\mathcal B}(U)$ is called a {\it scattered linear set}.
We denote the element of $\mathcal{D}$ corresponding to a point $P$ of $\PG(n,q_0^t)$ by $\mathcal{S}(P)$. If $U$ is a subset of $\PG(n,q)$, then we define
$\mathcal{S}(U):=\lbrace \mathcal{S}(P)\vert \vert P \in U \rbrace$.
Analogously to the correspondence between the points of $\PG(n,q_0^t)$, and the elements $\mathcal D$, we obtain the correspondence between the lines of $\PG(n,q)$ and the $(2t-1)$-dimensional subspaces of $\PG((n+1)t-1,q_0)$ spanned by two elements of $\mathcal D$, and in general, we obtain the correspondence between the $(n-k)$-spaces of $\PG(n,q)$ and the $((n-k+1)t-1)$-dimensional subspaces of $\PG((n+1)t-1,q_0)$ spanned by $n-k+1$ elements of $\mathcal D$. With this in mind, it is clear that any $tk$-dimensional subspace $U$ of $\PG(t(n+1)-1,q_0)$ defines a $k$-blocking set $\mathcal{B}(U)$  in $\PG(n,q)$. A ($k$-)blocking set constructed in this way is called a {\it linear ($k$-)blocking set}, or an {\it $\mathbb{F}_{q_0}$-linear ($k$-)blocking set} if we want to specify the underlying field.

By far the most challenging problem concerning blocking sets is the so-called {\it linearity conjecture}. Since 1998 it has been conjectured by many mathematicians working in the field. The conjecture was explicitly stated in the literature by Sziklai in \cite{sziklai}.
\begin{itemize}
\item[{(LC)}]
{\it All small minimal $k$-blocking sets in $\PG(n,q)$ are linear.}
\end{itemize}
Various instances of the conjecture have been proved; for an overview we refer to \cite{sziklai}. In this paper we prove the linearity conjecture for small minimal $k$-blocking sets in $\PG(n,p^3)$, $p\geq 7$, as a corollary of the following main theorem:
\begin{theorem} A small minimal $k$-blocking set in $\PG(n,q^3)$, $q=p^h$, $p$ prime, $h\geq 1$, $p\geq 7$, intersecting every $(n-k)$-space in $1\pmod{q}$ points is linear.
\end{theorem}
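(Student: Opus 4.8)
The plan is to pass to the field-reduced model and to construct the required subspace explicitly. Write $q^3=q^t$ with $t=3$ and apply field reduction, so that $B\subseteq\PG(n,q^3)$ is identified with the union $\mathcal S(B)$ of the $|B|$ planes of the Desarguesian $2$-spread $\mathcal D$ of $\Sigma:=\PG(3(n+1)-1,q)$; an $(n-k)$-space $\mu$ of $\PG(n,q^3)$ then corresponds to a $(3(n-k+1)-1)$-dimensional subspace $\Pi_\mu$ of $\Sigma$ spanned by spread elements, and $|B\cap\mu|$ equals the number of elements of $\mathcal D$ contained in $\Pi_\mu$. Proving the theorem amounts to producing a $3k$-dimensional subspace $U$ of $\Sigma$ with $\mathcal B(U)=B$. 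A first, routine reduction lets us assume $B$ spans $\PG(n,q^3)$: if $\langle B\rangle\cong\PG(m,q^3)$ with $m<n$, then $B$ is still a small minimal $k$-blocking set of $\langle B\rangle$ meeting every $(m-k)$-space of it in $1\pmod q$ points. A second reduction brings the planar case into focus: hyperplane sections of a $k$-blocking set are $(k-1)$-blocking sets of a $\PG(n-1,q^3)$ that (for a suitable choice of hyperplane) inherit all hypotheses, and a generic projection of a $1$-blocking set onto a plane still blocks all lines; so the model situation that governs the whole argument is that of small minimal blocking sets of $\PG(2,q^3)$ meeting every line in $1\pmod q$ points. These reductions are delicate --- sections and projections need not preserve minimality or injectivity --- and setting them up carefully is part of the work.

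The analytic heart is the polynomial method. The congruence hypothesis together with the size bound $|B|<\tfrac32(q^{3k}+1)$ forces every $(n-k)$-space to meet $B$ in $1+jq$ points with $j$ bounded, minimality forces tangent $(n-k)$-spaces, and the admissible values of $|B|$ shrink to a short list. In the planar model one attaches a R\'edei polynomial to $B$ and analyses its factorisation: the $1\pmod q$ condition yields strong divisibility and lacunarity, and stability estimates for lacunary polynomials then pin $|B|$ down to $q^3+1$, $q^3+q^2+1$ or $q^3+q^2+q+1$ and show that $\mathcal S(B)$ is contained in a $3$-dimensional subspace $W$ of $\Sigma$, meeting one spread plane in a plane, a line, or just a point respectively. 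The hypothesis $p\ge 7$ enters precisely here, through the degree conditions underlying the lacunary-polynomial lemmas (and it also keeps the linear blocking sets eventually produced within the size bound $<\tfrac32(q^{3k}+1)$, which fails for very small $p$), so it cannot be dropped without a different technique.

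The last and, I expect, hardest step is the geometric assembly: turning the local linear data into a single $3k$-space that represents $B$ exactly. In the inductive step one picks hyperplanes $H_1,H_2$ in general position, obtains $3(k-1)$-spaces $U_{H_i}$ of $\Sigma$ with $\mathcal B(U_{H_i})=B\cap H_i$, recognises $U_{H_1}\cap U_{H_2}$ as a representative of the linear $(k-2)$-blocking set $B\cap H_1\cap H_2$ so that $\langle U_{H_1},U_{H_2}\rangle$ has dimension exactly $3k$, and then has to show that the representative of every other hyperplane section lies in this span. The obstacle is exactly this compatibility: the subspace representing a given linear set is far from unique --- through each point of $\mathcal B(U)$ there is a distinct $U'$ with $\mathcal B(U')=\mathcal B(U)$ --- so one must make coherent choices along a full flag of subspaces of $\PG(n,q^3)$ and propagate them, excluding configurations that fit together combinatorially but not linearly. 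Once a candidate $U$ is in hand, $\mathcal B(U)=B$ follows by counting: $\mathcal B(U)$ is a linear, hence small, $k$-blocking set meeting every $(n-k)$-space in $1\pmod q$ points and containing the minimal blocking set $B$, and comparing, over all $\mu$, the numbers of spread elements of $\mathcal D$ inside the subspaces $\Pi_\mu$ forces $|\mathcal B(U)|=|B|$; hence $\mathcal B(U)=B$ and $B$ is linear.
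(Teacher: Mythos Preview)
Your proposal identifies the right target (a $3k$-space $U$ in the field-reduced model with $\mathcal B(U)=B$) and the right obstacle (non-uniqueness of the representing subspace), but it does not supply the mechanism that overcomes that obstacle, and the specific inductive scheme you sketch does not obviously work.

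Concretely: you propose to take two hyperplanes $H_1,H_2$, invoke induction to get $3(k-1)$-spaces $U_{H_1},U_{H_2}$, and then set $U=\langle U_{H_1},U_{H_2}\rangle$. But $U_{H_1}$ and $U_{H_2}$ are determined only up to the action of the elementwise stabiliser of $\mathcal D$, and there is no a priori reason their intersection should represent $B\cap H_1\cap H_2$ or even have the correct dimension; you acknowledge this, but ``make coherent choices along a full flag and propagate them'' is a restatement of the problem, not a solution. The paper avoids this entirely by anchoring everything at a single point: one finds a point $P\in B$ and a tangent $(n-k)$-space $\pi$ at $P$, then looks at the many small $(n-k+1)$-spaces $H_i$ through $\pi$. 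Each $B\cap H_i$ is a small linear $1$-blocking set, hence $\mathcal B(\pi_i)$ for a $3$-space $\pi_i$, and because the stabiliser of $\mathcal D$ is transitive on the points of a spread element one may take all the $\pi_i$ through a \emph{common} point $x\in\mathcal S(P)$. This is what makes the spans $\langle\pi_i,\pi_j\rangle$ behave. The second missing ingredient is how one proves $\mathcal B(\langle\pi_i,\pi_j\rangle)\subseteq B$: this is done not by counting but by the subline intersection results (a $\PG(1,q)$ meets an $\F$-linear set of $\PG(1,q^3)$ in $0,1,2,3$ or $q+1$ points, and analogously for Baer sublines), which let one promote ``$\geq 4$ points of a subline in $B$'' to ``the whole subline in $B$''. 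Your plan has no analogue of this step.

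Two smaller points. First, you do not need a fresh polynomial-method argument in the plane: the $1$-blocking set case is handled by the known classification (Polverino; Storme--Weiner) once one shows $|B|\le q^3+q^2+q+1$, and that bound is a standard double-count. Second, your closing containment is inverted: the construction gives $\mathcal B(U)\subseteq B$, and then minimality of $B$ together with the fact that $\mathcal B(U)$ is itself a $k$-blocking set yields $B=\mathcal B(U)$; you will not get $B\subseteq\mathcal B(U)$ directly from the assembly.
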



\subsection{Known characterisation results}
In this section we mention a few results, that we will rely on in the sequel of this paper.
First of all, observe that a subspace intersects a linear set of $\PG(n,p^h)$ in  $1 \pmod{p}$ or zero points. The following result of Sz\H{o}nyi and Weiner shows that this property holds for all small minimal blocking sets.
\begin{result} \label{Sz}\cite[Theorem 2.7]{sz} 
If $B$ is a small minimal $k$-blocking set of $\PG(n,q)$, $p>2$, then every subspace intersects $B$ in $1 \pmod{p}$ or zero points.
\end{result}
Result \ref{Sz} answers the linearity conjecture in the affirmative for $\PG(n,p)$.
For $\PG(n,p^2)$, the linearity conjecture was proved by Weiner (see \cite{weiner2}).
For $1$-blocking sets in $\PG(n,q^3)$, we have the following theorem 
of Polverino ($n=2$) and Storme and Weiner ($n\geq 3$).
\begin{result} \label{opmer}\cite{pol2}\label{St}\cite{Storme-Weiner} A minimal $1$-blocking set in $\PG(n,q^3)$, $q=p^h$, $h\geq 1$, $p$ prime, $p\geq 7$, $n\geq 2$, of size at most $q^3+q^2+q+1$, is linear.
\end{result}

In Theorem \ref{basis} we show that this implies the linearity conjecture for small minimal 1-blocking sets $\PG(n,q^3)$, $p\geq7$, that intersect every hyperplane in $1\pmod{q}$ points.

The following Result by Sz\H{o}nyi and Weiner gives a sufficient condition for a blocking set to be minimal.
\begin{result}\label{obs}\cite[Lemma 3.1]{sz} Let $B$ be a $k$-blocking set of $\PG(n,q)$, and suppose that $\vert B \vert \leq 2q^{k}$. If each $(n-k)$-dimensional subspace of $\PG(n,q)$ intersects $B$ in $1 \pmod{p}$ points, then $B$ is minimal. 
\end{result}

\subsection{The intersection of a subline and an $\F$-linear set}
The possibilities for an $\F$-linear set of $\PG(1,q^3)$, other than the empty set, a point, and the set $\PG(1,q^3)$ itself are the following:
a subline $\PG(1,q)$ of $\PG(1,q^3)$, corresponding to the a line of $\PG(5,q)$ not contained in an element of $\mathcal D$;
a set of $q^2+1$ points of $\PG(1,q^3)$, corresponding to a plane of $\PG(5,q)$ that intersects an element of $\mathcal D$ in a line;
a set of $q^2+q+1$ points of $\PG(1,q^3)$, corresponding to a plane of $\PG(5,q)$ that is scattered w.r.t. $\mathcal D$.

The following results describe the possibilities for the intersection of a subline  with an $\F$-linear set in $\PG(1,q^3)$, and will play an important role in this paper.
\begin{result}\cite{LSV3}\label{gevolg} A subline $\cong \PG(1,q)$ intersects an $\F$-linear set  of $\PG(1,q^3)$ in $0,1,2,3,$ or $q+1$ points.
\end{result}
\begin{result}\cite[Lemma 4.4, 4.5, 4.6]{Leo}\label{ba} Let $q$ be a square. A subline $\PG(1,q)$ and a Baer subline $\PG(1,q\sqrt{q})$ of $\PG(1,q^3)$ share at most a subline $\PG(1,\sqrt{q})$. A Baer subline $\PG(1,q\sqrt{q})$ and an $\F$-linear set of $q^2+1$ or $q^2+q+1$ points in $\PG(1,q^3)$ share at most $q+\sqrt{q}+1$ points. 
\end{result}


\section{Some bounds and the case $k=1$}

The Gaussian coefficient $\left[\begin{array}{c}n\\k \end{array}\right]_q$ denotes the number of $(k-1)$-subspaces in $\PG(n-1,q)$, i.e.,
$$\left[\begin{array}{c}n\\k \end{array}\right]_q=\frac{(q^n-1)(q^{n-1}-1)\cdots(q^{n-k+1}-1)}{(q^k-1)(q^{k-1}-1)\cdots(q-1)}.$$

\begin{lemma}\label{grootte}
If $B$ is a subset of $\PG(n,q^3)$, $q\geq 7$, intersecting every $(n-k)$-space, $k\geq 1$, in $1 \pmod{q}$ points, and $\pi$ is an $(n-k+s)$-space, $s\leq k$,  then either
$$\vert B\cap \pi \vert< q^{3s}+q^{3s-1}+q^{3s-2}+3q^{3s-3}$$
or
$$\vert B\cap \pi \vert> q^{3s+1}-q^{3s-1}-q^{3s-2}-3q^{3s-3}.$$
\end{lemma}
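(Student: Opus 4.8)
The plan is to proceed by induction on $s$, using a double-counting argument over the $(n-k+s-1)$-subspaces contained in $\pi$. The base case $s=0$ is immediate: $|B\cap\pi|\equiv 1\pmod q$ forces $|B\cap\pi|\ne 0$, and since $q\ge 7$ we have $1 < q+q^{-1}+\cdots$, so the lower alternative holds trivially (more precisely the statement for $s=0$ reads $|B\cap\pi| \in \{1\}$ mod the stated gaps, which is consistent). For the inductive step, fix an $(n-k+s)$-space $\pi$ and count incident pairs $(x,\sigma)$ where $x\in B\cap\pi$ and $\sigma$ is an $(n-k+s-1)$-subspace of $\pi$ through $x$. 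Each such $\sigma$ meets $B$ in some number of points which, by the induction hypothesis, lies either below $m^- := q^{3(s-1)}+q^{3(s-1)-1}+q^{3(s-1)-2}+3q^{3(s-1)-3}$ or above $m^+ := q^{3(s-1)+1}-q^{3(s-1)-1}-q^{3(s-1)-2}-3q^{3(s-1)-3}$; also, every such $\sigma$ is a genuine $(n-k+(s-1))$-space, so it does meet $B$ and hence the count is never $0$.

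Next I would set up the two counting identities. Writing $b := |B\cap\pi|$ and letting $N$ be the number of hyperplanes $\sigma$ of $\pi$, standard incidence counting gives $\sum_\sigma |B\cap\sigma| = b\cdot(\text{number of hyperplanes of }\pi\text{ through a fixed point})$, and a second count of pairs with two points of $B$ yields $\sum_\sigma \binom{|B\cap\sigma|}{2} = \binom{b}{2}\cdot(\text{number of hyperplanes through two fixed points})$; all these incidence numbers are explicit Gaussian coefficients in $q^3$. Let $a$ be the number of hyperplanes $\sigma$ with $|B\cap\sigma|\le m^-$ ("small" hyperplanes) and $N-a$ the number with $|B\cap\sigma|\ge m^+$ ("large" hyperplanes). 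Combining the two identities to eliminate the nonlinear terms, or equivalently applying a variance/convexity estimate, one obtains an inequality that is badly violated unless $b$ itself is close to a "small" value (of order $q^{3s}$) or a "large" value (of order $q^{3s+1}$); the precise thresholds $q^{3s}+q^{3s-1}+q^{3s-2}+3q^{3s-3}$ and $q^{3s+1}-q^{3s-1}-q^{3s-2}-3q^{3s-3}$ come out of carefully tracking the error terms, using $q\ge 7$ to absorb lower-order contributions.

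The main obstacle I anticipate is the bookkeeping in the intermediate regime: one has to rule out the possibility that $b$ sits strictly between the two thresholds by showing that no admissible combination of "small" and "large" hyperplane-intersection sizes can average out to such a $b$ while simultaneously satisfying the second-moment identity. Concretely, if $b$ were in the forbidden band, then either almost all hyperplanes through a typical point are "small" (forcing $b$ down to the small threshold) or almost all are "large" (forcing $b$ up); the delicate point is quantifying "almost all" sharply enough that the constant $3$ in the lowest-order term survives rather than degrading. I expect this to reduce, after dividing through by appropriate powers of $q$, to a one-variable polynomial inequality in $q$ which holds for all $q\ge 7$, checkable directly. The $1\pmod q$ hypothesis enters twice: to guarantee every relevant subspace meets $B$ (no zero term in the sums), and, via Result~\ref{Sz}-type rigidity on the possible residues, to stop $|B\cap\sigma|$ from taking values that would otherwise weaken the gap.
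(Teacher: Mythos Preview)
Your approach differs substantially from the paper's. The paper does \emph{not} induct on $s$ and does not look at hyperplanes of $\pi$; instead it works directly inside $\pi$ and counts $(n-k)$-dimensional subspaces of $\pi$. Writing $x_i$ for the number of $(n-k)$-spaces of $\pi$ meeting $B_\pi := B\cap\pi$ in exactly $i$ points, the three standard identities (total count, incident pairs, incident ordered pairs of distinct points) are combined with the single observation that, since every $(n-k)$-space meets $B$ in $1\pmod q$ points, every $i$ with $x_i\neq 0$ satisfies $(i-1)(i-1-q)\geq 0$. Summing yields $\sum_i (i-1)(i-1-q)\,x_i \geq 0$, which unfolds into one explicit quadratic inequality in $|B_\pi|$; plugging in either endpoint of the forbidden band and using $q\geq 7$ gives a contradiction. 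No induction, no small/large dichotomy on intermediate subspaces, no error propagation.

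Your inductive route could in principle produce \emph{some} gap, but the concern you already raise---whether the constant $3$ in the lowest-order term survives each inductive step---is genuine and, I think, fatal for the \emph{stated} bounds: feeding the level-$(s-1)$ thresholds $m^\pm$ into first and second moment identities at level $s$ will typically degrade the constants, and you give no mechanism to prevent compounding. The paper's argument avoids this entirely because it extracts the quadratic inequality directly from the exact arithmetic constraint $i\equiv 1\pmod q$ at the bottom dimension, uniformly for every $s$. The idea you are missing is to count the smallest-dimensional witnesses (the $(n-k)$-spaces themselves) inside $\pi$, so that the $1\pmod q$ hypothesis is used sharply via the factorisation $(i-1)(i-1-q)$ rather than through a softer inductive gap.
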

\begin{proof}
Let $\pi$ be an $(n-k+s)$-space of $\PG(n,q^3)$, and put $B_\pi:=B\cap \pi$.
Let $x_i$ denote the number of $(n-k)$-spaces of $\pi$ intersecting $B_\pi$ in $i$ points. 
Counting the number of $(n-k)$-spaces, the number of incident pairs $(P, \sigma)$ with $
P\in B_\pi, P\in \sigma,\sigma$ an $(n-k)$-space, and the number of triples $(P_1,P_2,\sigma)
$, with $P_1,P_2\in B_\pi$, $P_1\neq P_2$, $P_1,P_2\in \sigma$, $\sigma$ an $(n-k)$-space 
yields:
 \begin{eqnarray}
\sum_i x_i&=&\left[\begin{array}{c}n-k+s+1\\n-k+1 \end{array}\right]_{q^3},\\
\sum_i ix_i&=&\vert B_\pi \vert \left[\begin{array}{c}n-k+s\\n-k \end{array}\right]_{q^3
},\\
\sum i(i-1)x_i&=&\vert B_\pi\vert (\vert B_\pi\vert-1)\left[\begin{array}{c}n-k+s-1\\n-k-1 
\end{array}\right]_{q^3}.
\end{eqnarray}
Since we assume that every $(n-k)$-space intersects $B$ in $1\pmod{q}$ points, it follows that 
every $(n-k)$-space of $\pi$ intersect $B_\pi$ in $1\pmod{q}$ points, and hence
$\sum_i (i-1)(i-1-q)x_i\geq 0$.
Using Equations (1), (2), and (3), this yields that
$$\vert B_\pi \vert (\vert B_\pi\vert -1)(q^{3n-3k}-1)(q^{3n-3k+3}-1)-(q+1)\vert B_\pi \vert (q^{3n-3k+3s}-1)(q^{3n-3k+3}-1)$$
$$+(q+1)(q^{3n-3k+3s+3}-1)(q^{3n-3k+3s}-1)\geq 0.$$ Putting $\vert B_\pi\vert=q^{3s}+q^{3s-1}+q^{3s-2}+3q^{3s-3}$ or $\vert B_\pi\vert=q^{3s+1}-q^{3s-1}-q^{3s-2}-3q^{3s-3}$ in this inequality, with $q\geq7$, gives a contradiction. Hence the statement follows.
\end{proof}

\begin{theorem} \label{basis} A small minimal $1$-blocking set in $\PG(n,q^3)$, $p\geq 7$, intersecting every hyperplane in $1 \pmod{q}$ points, is linear.
\end{theorem}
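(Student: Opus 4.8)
The plan is to reduce everything to Result~\ref{St} (Polverino for $n=2$, Storme--Weiner for $n\geq3$), which already classifies the minimal $1$-blocking sets of $\PG(n,q^3)$ of size at most $q^3+q^2+q+1$; the only real work is then to show that our $B$ has size at most $q^3+q^2+q+1$. The size estimate of Lemma~\ref{grootte} gets us to within one of this bound, and a short counting argument modulo $q$ closes the gap. One may assume $n\geq2$ throughout, the case $n=1$ being trivial (there $B=\PG(1,q^3)$, which is $\mathbb{F}_q$-linear).

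First I would apply Lemma~\ref{grootte} with $k=s=1$ and $\pi=\PG(n,q^3)$. It gives either $|B|<q^3+q^2+q+3$ or $|B|>q^4-q^2-q-3$. Since $B$ is small, $|B|<3(q^3+1)/2$, which for $q\geq7$ is strictly smaller than $q^4-q^2-q-3$; hence the second alternative is impossible and $|B|\leq q^3+q^2+q+2$.

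Next I would show that $|B|\equiv1\pmod q$, which rules out the value $q^3+q^2+q+2$ because $q\geq7>2$. By minimality, every point of $B$ lies on a tangent hyperplane. Fix distinct points $P_1,P_2\in B$ (note $|B|\geq2$, since $B$ meets every hyperplane) and tangent hyperplanes $H_1\ni P_1$ and $H_2\ni P_2$. Then $H_1\neq H_2$, for otherwise $B\cap H_1=B\cap H_2$ would give $P_1=P_2$; hence $\tau:=H_1\cap H_2$ is an $(n-2)$-space. Since $B\cap H_1=\{P_1\}$ and $P_2\neq P_1$ we have $P_2\notin H_1$, and similarly $P_1\notin H_2$, so $B\cap\tau\subseteq B\cap H_1\cap H_2\subseteq\{P_1\}\cap H_2=\emptyset$. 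The $q^3+1$ hyperplanes through $\tau$ partition $\PG(n,q^3)\setminus\tau$, hence partition $B$; each of them meets $B$ in $1\pmod q$ points, while $H_1$ and $H_2$ meet $B$ in exactly one point. Writing the other $q^3-1$ intersection sizes as $1+qa_i$ with integers $a_i\geq0$, I get $|B|=2+(q^3-1)+q\sum_i a_i=q^3+1+q\sum_i a_i$, so indeed $|B|\equiv1\pmod q$. Together with $|B|\leq q^3+q^2+q+2$ this forces $|B|\leq q^3+q^2+q+1$, and then Result~\ref{St} shows that $B$ is linear.

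I do not expect a genuine obstacle here: the hard classification is imported via Result~\ref{St}, and the rest is elementary. The step deserving the most care is the construction of an $(n-2)$-space meeting $B$ trivially --- this is exactly where minimality enters, through the existence of tangent hyperplanes --- together with a check of the degenerate low-dimensional cases ($n=1$, and $n=2$, where $\tau$ reduces to a single point and the hyperplanes through it are lines).
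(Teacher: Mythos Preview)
Your proof is correct and follows essentially the same approach as the paper's: bound $|B|$ via Lemma~\ref{grootte}, establish $|B|\equiv 1\pmod q$, and then invoke Result~\ref{St}. The paper treats the congruence step as ``easy to see''; your argument via two tangent hyperplanes is a valid way to fill this in (a slightly shorter alternative is the double count $\sum_H |B\cap H| = |B|\cdot\left[\begin{smallmatrix}n\\ n-1\end{smallmatrix}\right]_{q^3}$, both sides taken modulo $q$).
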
 
\begin{proof}
Lemma \ref{grootte} implies that a small minimal $1$-blocking set $B$ in $\PG(n,q^3)$, intersecting every hyperplane in $1 \pmod{q}$ points, has at most $q^3+q^2+q+3$ points. Since every hyperplane intersects $B$ in $1 \pmod {q}$ points, it is easy to see that $\vert B \vert \equiv 1\pmod{q}$. This implies that $\vert B \vert \leq q^3+q^2+q+1$. Result \ref{opmer} shows that $B$ is linear.
\end{proof}
\begin{corollary}A small minimal $1$-blocking set in $\PG(n,p^3)$, $p$ prime, $p\geq 7$, is $\mathbb{F}_p$-linear.
\end{corollary}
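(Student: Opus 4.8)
The plan is to read off the corollary from Theorem~\ref{basis} by specialising to the case in which the base field is prime. So I would take $B$ to be a small minimal $1$-blocking set of $\PG(n,p^3)$ with $p\geq 7$, and set $q:=p$; then the ambient space is $\PG(n,q^3)$ with $q$ prime, and it remains only to verify that $B$ meets every hyperplane in $1\pmod q$ points, since everything else in the hypothesis of Theorem~\ref{basis} is immediate.

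The point is that over a prime base field this divisibility condition is automatic. Since $B$ is small and minimal and $p=q>2$, Result~\ref{Sz} (applied with its ``$q$'' equal to $p^3$) shows that every subspace of $\PG(n,p^3)$ meets $B$ in $0$ or in $1\pmod p$ points; in particular every hyperplane does. But $B$ is a $1$-blocking set, so no hyperplane misses $B$, and hence every hyperplane meets $B$ in $1\pmod p$, that is, in $1\pmod q$, points.

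Having checked this, I would simply invoke Theorem~\ref{basis}: $B$ is a small minimal $1$-blocking set of $\PG(n,q^3)$ with $q=p\geq 7$ meeting every hyperplane in $1\pmod q$ points, so $B$ is linear. Since here $q=p$, ``linear'' means $\mathbb{F}_q$-linear, i.e.\ $\mathbb{F}_p$-linear, which is the claim.

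I do not expect any real obstacle: the substantive content has already been carried out in Theorem~\ref{basis}, which rests on the counting bound of Lemma~\ref{grootte} (forcing $|B|\leq q^3+q^2+q+1$) together with the characterisation of small minimal $1$-blocking sets in Result~\ref{opmer}. The only caveat worth noting is that the restriction $p\geq 7$ is inherited from Result~\ref{opmer} and cannot be relaxed by this argument.
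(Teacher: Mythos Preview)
Your proposal is correct and matches the paper's own proof, which simply reads ``This follows from Result~\ref{Sz} and Theorem~\ref{basis}.'' The only cosmetic point is that Theorem~\ref{basis} concludes ``linear'' rather than ``$\mathbb{F}_q$-linear''; but since the only proper subfield of $\mathbb{F}_{p^3}$ is $\mathbb{F}_p$, any linear set in $\PG(n,p^3)$ is automatically $\mathbb{F}_p$-linear, so your conclusion stands.
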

\begin{proof} This follows from Result \ref{Sz} and Theorem \ref{basis}.
\end{proof}

For the remaining of this section, we use the following assumption:
\begin{itemize}
\item[(B)]{$B$ is small minimal $k$-blocking set in $\PG(n,q^3)$, $p\geq 7$, intersecting every $(n-k)$-space in $1 \pmod{q}$ points.}
\end{itemize}

For convenience let us introduce the following terminology.
A $full$ line of $B$ is a line which is contained in $B$. An $(n-k+s)$-space $S$, $s<k$, is called {\em large} if $S$ contains more than $q^{3s+1}-q^{3s-1}-q^{3s-2}-3q^{3s-3}$ points of $B$, and $S$ is called {\em small} if it contains less than $q^{3s}+q^{3s-1}+q^{3s-2}+3q^{3s-3}$ points of $B$.

\begin{lemma}\label{extra} Let $L$ be a line such that $1<\vert B\cap L \vert<q^3+1.$

(1) For all $i\in \{1,\ldots,n-k\}$ there exists an $i$-space $\pi_i$ on $L$ such that $B\cap \pi_i=B\cap L$.

(2) Let $N$ be a line, skew to $L$. For all $j\in \lbrace 1,\ldots k-2\rbrace$, there exists a small $(n-k+j)$-space $\pi_j$ on $L$, skew to $N$.
\end{lemma}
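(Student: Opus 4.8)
I want to prove both parts of Lemma~\ref{extra} by a dimension-counting / induction argument, using Lemma~\ref{grootte} to control how many points of $B$ any given subspace can contain. The key numerical fact I will exploit repeatedly is that under hypothesis (B), $B$ is small, so $|B|<3(q^{3k}+1)/2$, and every $(n-k)$-space meets $B$ in $1\pmod q$ points; Lemma~\ref{grootte} then forces every $(n-k+s)$-space to be either ``small'' or ``large'' in the sense just defined, with a wide gap between the two ranges.

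\medskip\noindent\textbf{Proof of (1).} I proceed by induction on $i$. For $i=1$ the claim is trivial: take $\pi_1=L$. Suppose $\pi_i$ has been constructed with $B\cap\pi_i=B\cap L$ and $i\leq n-k-1$. I must find an $(i+1)$-space $\pi_{i+1}\supset\pi_i$ with $B\cap\pi_{i+1}=B\cap L$; equivalently, I must find a point $Q\notin\pi_i$ such that the span $\langle\pi_i,Q\rangle$ contains no point of $B$ outside $\pi_i$. Suppose, for contradiction, that every $(i+1)$-space through $\pi_i$ contains a point of $B\setminus\pi_i$. The number of $(i+1)$-spaces through $\pi_i$ is $[\,{}^{\,n-i}_{\;1}\,]_{q^3}=(q^{3(n-i)}-1)/(q^3-1)$, and each point of $B\setminus\pi_i$ lies in exactly $[\,{}^{\,n-i-1}_{\;\;\;0}\,]_{q^3}=1$ such space only if it lies on the unique line through it and... more carefully: a point $Q\notin\pi_i$ lies in exactly one $(i+1)$-space through $\pi_i$, namely $\langle\pi_i,Q\rangle$. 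Hence if every such $(i+1)$-space contains a point of $B\setminus\pi_i$, then $|B\setminus\pi_i|\geq (q^{3(n-i)}-1)/(q^3-1)$. Since $i\leq n-k-1$, this is at least $(q^{3(k+1)}-1)/(q^3-1)>q^{3k}$, contradicting that $B$ is small (so $|B|<3(q^{3k}+1)/2<2q^{3k}$ — in fact I will want the sharper bound, but $|B|<2q^{3k}$ already does it since $1<|B\cap L|<q^3+1$ means $|B\setminus\pi_i|=|B|-|B\cap L|$ and the right-hand side above exceeds $|B|$). This contradiction yields the desired $\pi_{i+1}$, completing the induction and hence the proof of~(1).

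\medskip\noindent\textbf{Proof of (2).} Again I induct, this time producing a chain of \emph{small} subspaces on $L$ that avoid the skew line $N$. Start from $\pi_1=L$, which contains fewer than $q^3+1$ points of $B$, hence is small (for $s=1$ the threshold in Lemma~\ref{grootte} is $q^3+q^2+q+3$, so $|B\cap L|\leq q^3<q^3+q^2+q+3$; note $L$ is indeed small, not large). Given a small $(n-k+j)$-space $\pi_j\supset L$ skew to $N$, with $1\leq j\leq k-3$, I seek a small $(n-k+j+1)$-space $\pi_{j+1}\supset\pi_j$ still skew to $N$. I count the $(n-k+j+1)$-spaces through $\pi_j$: there are $(q^{3(k-j)}-1)/(q^3-1)$ of them. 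Those meeting $N$: since $\pi_j$ is skew to $N$, the subspace $\langle\pi_j,N\rangle$ has dimension $(n-k+j)+2=n-k+j+2$, and an $(n-k+j+1)$-space through $\pi_j$ meets $N$ iff it is contained in $\langle\pi_j,N\rangle$, iff it is $\langle\pi_j,P\rangle$ for some $P\in N$; there are exactly $q^3+1$ of these. Those that are large: I bound their number using the gap from Lemma~\ref{grootte}. If $m$ of the $(n-k+j+1)$-spaces through $\pi_j$ are large, then summing points of $B$ (and using that the $\pi_j$ is small so contributes little to each count, and points of $B$ outside $\pi_j$ are distributed among these spaces) I get roughly $m\cdot\big(q^{3(j+1)+1}-q^{3(j+1)-1}-\cdots\big)\lesssim |B|+(\text{number of spaces})\cdot|B\cap\pi_j|$, which, since $|B|<2q^{3k}$ and $|B\cap\pi_j|<2q^{3j}$ while each large space has more than about $q^{3j+4}$ points of $B$, forces $m$ to be small — certainly $m<(q^{3(k-j)}-1)/(q^3-1)-(q^3+1)$ once $q\geq 7$. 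Hence a small $(n-k+j+1)$-space through $\pi_j$ and skew to $N$ exists, closing the induction; iterating up to $j=k-2$ gives the statement.

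\medskip\noindent\textbf{Main obstacle.} The delicate point is the counting estimate in~(2) bounding the number $m$ of large subspaces through $\pi_j$: one must carefully account for the points of $B$ lying in $\pi_j$ itself, since these are counted in \emph{every} $(n-k+j+1)$-space through $\pi_j$, so the naive ``number of large spaces times minimum size $\leq |B|$'' is false. The fix is to count points of $B\setminus\pi_j$ only, using that a point outside $\pi_j$ lies in a unique $(n-k+j+1)$-space through $\pi_j$, so $\sum(\text{size of }B\setminus\pi_j\text{ in each such space})=|B\setminus\pi_j|<2q^{3k}$, while each large space contributes more than $q^{3(j+1)+1}-q^{3(j+1)-1}-q^{3(j+1)-2}-3q^{3(j+1)-3}-|B\cap\pi_j| > q^{3j+4}-q^{3j+3}-\cdots - 2q^{3j}$ to this sum; dividing, $m<2q^{3k}/(q^{3j+4}-\cdots)$, which for $q\geq 7$ and $j\leq k-3$ is comfortably below the total number $(q^{3(k-j)}-1)/(q^3-1)$ of such spaces minus the $q^3+1$ that meet $N$. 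Making these inequalities clean for all admissible $j$ and $k$ — and checking the base case $j=1$ separately, since there the space $\pi_1=L$ is small only because $|B\cap L|\le q^3$ — is the bulk of the work.
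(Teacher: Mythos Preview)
Your induction in Part~(1) breaks at the very last step, $i=n-k-1$. There the number of $(n-k)$-spaces through $\pi_{n-k-1}$ is $(q^{3(k+1)}-1)/(q^3-1)=1+q^3+\cdots+q^{3k}$, which is only about $q^{3k}\cdot\frac{q^3}{q^3-1}$, whereas Lemma~\ref{grootte} allows $|B|$ as large as $q^{3k}+q^{3k-1}+q^{3k-2}+3q^{3k-3}\approx q^{3k}(1+1/q)$. Since the latter exceeds the former for every $q\geq 7$, the inequality ``right-hand side above exceeds $|B|$'' is simply false at this step, and you get no contradiction from assuming each $(n-k)$-space through $\pi_{n-k-1}$ contains a single extra point of $B$. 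The paper repairs exactly this step by invoking Result~\ref{Sz} (for $i<n-k-1$) and hypothesis~(B) (for $i=n-k-1$): any subspace through $L$ that meets $B$ off $L$ does so in at least $p$ (respectively $q$) points, so the count picks up an extra factor of $p$ or $q$, which is what pushes it above $|B|$. Your argument works for $i\leq n-k-2$ without this, but not for the final step.

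Part~(2) has an indexing slip that hides a genuine missing base case. You write ``start from $\pi_1=L$'', but $L$ is a line, not an $(n-k+1)$-space; the $\pi_j$ in the statement has dimension $n-k+j$. To get the induction off the ground you must first produce a \emph{small} $(n-k+1)$-space on $L$ that is also skew to $N$. The paper does this in two stages: it first uses Part~(1) to find an $(n-k-1)$-space $\pi_{n-k-1}$ on $L$ with $B\cap\pi_{n-k-1}=B\cap L$ and skew to $N$ (the skewness needs an extra word, since Part~(1) as stated ignores $N$), then argues separately that some $(n-k)$-space through it, still skew to $N$, also has no extra points of $B$, and finally passes to a small $(n-k+1)$-space using the count you sketch. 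Your outline jumps over all of this. Once the base case is in place, your inductive step for $j\to j+1$ is essentially the paper's argument and the estimates you give are in the right ballpark.
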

\begin{proof} 
(1) It follows from Result \ref{Sz} that every subspace on $L$ intersects $B\setminus L$ in zero or at least $p$ points. We proceed by induction on the dimension $i$. The statement obviously holds for $i=1$. Suppose there exists an $i$-space $\pi_i$ on $L$ such that $\pi_i\cap B$=$L\cap B$, with $i\leq n-k-1$. If there is no $(i+1)$-space intersecting $B$ only on $L$, then the number of points of $B$ is at least
$$
|B\cap L|+p(q^{3(n-i)-3}+q^{3(n-i)-6}+\ldots+q^3+1),
$$
but by Lemma \ref{grootte} $\vert B \vert \leq q^{3k}+q^{3k-1}+q^{3k-2}+3q^{3k-3}$. If $i<n-k-1$ this is a contradiction. If $i=n-k-1$ then in the above count we may replace the factor $p$ by a factor $q$, using the hypothesis (B), and hence also in this case we get a contradiction.
We may conclude that there exists an $i$-space $\pi_i$ on $L$ such that $B\cap L=B\cap \pi_i$, $\forall i\in \{1,\ldots,n-k\}$.

(2) Part (1) shows that there is an $(n-k-1)$-space $\pi_{n-k-1}$ on $L$, skew to $N$, such that $B\cap L=B\cap \pi_{n-k-1}$. 
If an $(n-k)$-space through $\pi_{n-k-1}$ contains an extra element of $B$, it contains at least $q^2$ extra elements of $B$, since a line containing 2 points of $B$ contains at least $q+1$ points of $B$. This implies that there is an $(n-k)$-space $\pi_{n-k}$ through $\pi_{n-k-1}$ with no extra points of $B$, and skew to $N$. 

We proceed by induction on the dimension $i$. Lemma \ref{hypervlakken}(1) shows that there are at least $(q^{3k}-1)/(q^3-1)-q^{3k-5}-5q^{3k-6}+1>q^3+1$ small $(n-k+1)$-spaces through $\pi_{n-k}$ which proves the statement for $i=1$.

Suppose that there exists an $(n-k+t)$-space $\pi_{n-k+t}$ on $L$, skew to $N$, such that $B\cap \pi_{n-k+t}$ is a small minimal $t$-blocking set of $\pi_{n-k+t}$. An $(n-k+t+1)$-space through $\pi_{n-k+t}$ contains at most $(q^{3t+4}-1)(q-1)$ or more than $q^{3t+4}-q^{3t+2}-q^{3t+1}-3q^{3t}$ points of $B$ (see Lemmas \ref{grootte} and \ref{kleine}).

Suppose all $(q^{3k-3t}-1)(q^3-1)-q^3-1$ $(n-k+t)$-spaces through $\pi_{n-k+t-1}$, skew to $N$, contain more than $q^{3t+4}-q^{3t+2}-q^{3t+1}-3q^{3t}$ points of $B$. Then the number of points in $B$ is larger than $q^{3k}+q^{3k-1}+q^{3k-2}+3q^{3k-3}$ if $t\leq k-3$, a contradiction. 

We may conclude that there exists an $(n-k+j)$-space $\pi_j$ on $L$ such that $B\cap \pi_j$ is a small minimal $i$-blocking set, skew to $N$, $\forall j\in \{1,\ldots,k-2\}$.
\end{proof}

\begin{theorem}\label{inters}
A line $L$ intersects $B$ in a linear set.
\end{theorem}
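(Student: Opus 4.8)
The plan is to deduce the statement from the one-dimensional case already settled in Theorem~\ref{basis}. Concretely, I would cut $B$ with a well-chosen $(n-k+1)$-space $\Sigma$ containing $L$ so that $B\cap\Sigma$ becomes a small minimal $1$-blocking set of $\Sigma$ meeting every hyperplane of $\Sigma$ in $1\pmod q$ points, apply Theorem~\ref{basis} to conclude that $B\cap\Sigma$ is $\F$-linear, and finally use that the restriction of an $\F$-linear set to a line is again an $\F$-linear set to get that $B\cap L=(B\cap\Sigma)\cap L$ is linear. Before that, the easy cases are cleared away: if $|B\cap L|\le 1$ then $B\cap L$ is empty or a point, and if $|B\cap L|=q^3+1$ then $L\subseteq B$ and $B\cap L=\PG(1,q^3)$; each of these is a linear set. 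So one may assume $1<|B\cap L|<q^3+1$, which is precisely the hypothesis of Lemma~\ref{extra}.

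The next step is to locate $\Sigma$. By Lemma~\ref{extra}(1) there is an $(n-k)$-space $\pi$ through $L$ with $B\cap\pi=B\cap L$, so $|B\cap\pi|<q^3+1$. Consider the $N=(q^{3k}-1)/(q^3-1)$ $(n-k+1)$-spaces $\Sigma_1,\dots,\Sigma_N$ through $\pi$: any two of them meet exactly in $\pi$, and every point of $\PG(n,q^3)$ lies on some $\Sigma_i$, so $|B|=\sum_i|B\cap\Sigma_i|-(N-1)|B\cap\pi|$. If each $\Sigma_i$ were \emph{large} in the sense of Lemma~\ref{grootte} (more than $q^4-q^2-q-3$ points of $B$, the case $s=1$), then, using $N\ge q^{3k-3}$ and $|B\cap\pi|\le q^3$, a routine estimate valid for $q\ge 7$ would force $|B|>q^{3k}+q^{3k-1}+q^{3k-2}+3q^{3k-3}$, contradicting Lemma~\ref{grootte} applied to the whole space $\PG(n,q^3)$ (the case $s=k$) together with $B$ being small. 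Hence some $\Sigma:=\Sigma_i$ is not large, so by the dichotomy in Lemma~\ref{grootte} it is \emph{small}, that is, $|B\cap\Sigma|<q^3+q^2+q+3<3(q^3+1)/2$.

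Set $B':=B\cap\Sigma$, so $\Sigma\cong\PG(n-k+1,q^3)$ and $B'$ is small. Every hyperplane of $\Sigma$ is an $(n-k)$-space of $\PG(n,q^3)$, hence meets $B$, and by hypothesis (B) meets $B'$ in $1\pmod q$ points; thus $B'$ is a $1$-blocking set of $\Sigma$ meeting every hyperplane in $1\pmod q$ points. Since $|B'|\le 2q^3$ and every subspace meets $B$ in $1\pmod p$ points by Result~\ref{Sz}, Result~\ref{obs} shows $B'$ is minimal. Theorem~\ref{basis} then applies and yields that $B'$ is an $\F$-linear set of $\Sigma$. Finally $B\cap L=B'\cap L$: after field reduction of $\Sigma$ write $B'=\mathcal{B}(U)$ for a subspace $U$, and let $W$ be the subspace spanned by the spread elements corresponding to the points of $L$; since $W$ is partitioned by these (Desarguesian) spread elements and $R\cap U=R\cap(U\cap W)$ for every spread element $R\subseteq W$, one gets $B\cap L=\mathcal{B}(U\cap W)$, an $\F$-linear set of $L\cong\PG(1,q^3)$. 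This proves the theorem.

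I expect the counting argument producing the small $(n-k+1)$-space $\Sigma$ to be the main obstacle: it is exactly there that the hypothesis $q\ge 7$ (equivalently $p\ge 7$) is genuinely used, and one has to check that the estimate survives for the smallest relevant values of $k$. The only other point needing a word of justification is the (standard) fact that intersecting an $\F$-linear set with a subspace spanned by Desarguesian spread elements again gives an $\F$-linear set, which is what legitimises the passage from $B'$ back to $B\cap L$.
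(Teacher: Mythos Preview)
Your proposal is correct and follows essentially the same route as the paper: reduce to the case of a small $(n-k+1)$-space through $L$ by a counting argument over the $(n-k+1)$-spaces through an $(n-k)$-space $\pi$ with $B\cap\pi=B\cap L$ (Lemma~\ref{extra}), and then invoke Theorem~\ref{basis}. Your write-up is in fact a bit more careful than the paper's own proof in that you explicitly verify minimality of $B\cap\Sigma$ via Results~\ref{Sz} and~\ref{obs} and justify why restricting an $\F$-linear set to a line yields a linear set.
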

\begin{proof} 
Note that it is enough to show that $L$ is contained in a subspace of $\PG(n,q^3)$ intersecting $B$ in a linear set.
If $k=1$, then $B$ is linear by Theorem \ref{basis}, and the statement follows.
Let $k>1$, let $L$ be a line, not contained in $B$, intersecting $B$ in at least two points. 
It follows from Lemma \ref{extra} that there exists an $(n-k)$-space $\pi_L$ such that $B\cap L=B\cap \pi_L$.
If each of the $(q^{3k}-1)/(q^3-1)$ $(n-k+1)$-spaces through $\pi_L$ is large, then the number of points in $B$ is at least
$$\frac{q^{3k}-1}{q^3-1}(q^4-q^2-q-3-q^3)+q^3>q^{3k}+q^{3k-1}+q^{3k-2}+3q^{3k-3},$$
a contradiction.
Hence, there is a small $(n-k+1)$-space $\pi$ through $L$, so $B\cap \pi$ is a small $1$-blocking set which is linear by Theorem \ref{basis}. This concludes the proof.
\end{proof}

\begin{lemma}\label{hypervlakken} Let $\pi$ be an $(n-k)$-space of $\PG(n,q^3)$, $k>1$.
\begin{enumerate}
\item[(1)] If $B\cap \pi$ is a point, then there are at most $q^{3k-5}+4q^{3k-6}-1$ large $(n-k+1)$-spaces through $\pi$.
\item[(2)] If $\pi$  intersects $B$ in $(q\sqrt{q}+1)$, $q^2+1$ or $q^2+q+1$ collinear points, then there are at most $q^{3k-5}+5q^{3k-6}-1$ large $(n-k+1)$-spaces through $\pi$.
\item[(3)] If $\pi$ intersects $B$ in $q+1$ collinear points, then there are at most $3q^{3k-6}-q^{3k-7}-1$ large $(n-k+1)$-spaces through $\pi$.

\end{enumerate}
\end{lemma}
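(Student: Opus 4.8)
\emph{Skeleton.}
The proof rests on a double count of the $(n-k+1)$-spaces through $\pi$. Since any two distinct $(n-k+1)$-spaces through the $(n-k)$-space $\pi$ meet exactly in $\pi$, the sets $B\cap S\setminus\pi$, for $S$ an $(n-k+1)$-space through $\pi$, partition $B\setminus\pi$; hence if $N$ of them are large,
\[
|B|\ \ge\ N\bigl(q^{4}-q^{2}-q-3-|B\cap\pi|\bigr)+|B\cap\pi| .
\]
As $B$ is small, Lemma~\ref{grootte} with $s=k$ gives $|B|\le q^{3k}+q^{3k-1}+q^{3k-2}+3q^{3k-3}$ (and in fact $|B|\equiv 1\pmod q$, via pencils of $(n-k)$-spaces). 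The value of $|B\cap\pi|$ in the three cases is, respectively, $1$; one of $q\sqrt q+1$, $q^{2}+1$, $q^{2}+q+1$; and $q+1$ — these being the only possibilities when $B\cap\pi$ is collinear, by Theorem~\ref{inters} and the list of $\F$-linear sets on a line recalled above. Substituting, one already gets bounds of the stated \emph{shape} in $q$, but with leading term $q^{3k-4}$ rather than $q^{3k-5}$: the plain count is off by a factor $q$, and recovering that factor is the real content.

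\emph{Removing the factor $q$.}
I would argue by induction on $k$. Not all $(n-k+2)$-spaces through $\pi$ can be large (again by the count, a large $(n-k+2)$-space carries more than $q^{7}-q^{5}-q^{4}-3q^{3}$ points of $B$), so a large $(n-k+1)$-space $S$ through $\pi$ lies in plenty of \emph{small} $(n-k+2)$-spaces $T$. Inside such a $T$, the set $B\cap T$ is a $2$-blocking set of $T\cong\PG(n-k+2,q^{3})$ again satisfying hypothesis~(B), with $\pi$ a codimension-$2$ subspace; so the $k=2$ instance of the present lemma bounds the number of large $(n-k+1)$-spaces through $\pi$ contained in $T$ by $q+3$ (or fewer). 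Counting incident pairs $(S,T)$, and bounding separately the few large $(n-k+2)$-spaces through $\pi$, brings $N$ down to order $q^{3k-5}$; careful bookkeeping of the lower-order terms under $q\ge7$ yields the three displayed estimates. In cases (2) and (3) one also needs Results~\ref{gevolg} and~\ref{ba}, which cap the number of common points of a subline, respectively a Baer subline, with an $\F$-linear set of $\PG(1,q^{3})$, in order to control how many points of the collinear set $B\cap\pi$ can be reused by the linear set that $B$ cuts out on $S$; this is what turns the constants $3$, respectively $q+\sqrt q+1$, of those results, together with the size of $B\cap\pi$, into the constants $4$ and $5$ in the first two bounds and into the extra factor $q$ in the third.

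\emph{Main obstacle.}
The crux is the base case $k=2$, where induction is unavailable. There $B\cap H$, for a large hyperplane $H\supset\pi$, is a $1$-blocking set of $H\cong\PG(n-1,q^{3})$ that is \emph{not} small and has $\pi$ as a (near-)tangent hyperplane, and one must show that such configurations occur through a fixed $\pi$ only rarely. I would project $H$ from a point of $B\cap\pi$: since every line through it meets $B$ in $1$, $q+1$, $q^{2}+1$, $q^{2}+q+1$ or $q^{3}+1$ points (Theorem~\ref{inters}), the image of $B\cap H$ is a weighted point set of an affine space with all weights divisible by $q$, and the $1\pmod q$ condition on the $(n-k)$-spaces of $H$ through the centre forces it to block many affine hyperplanes, giving each large $H$ an effective contribution of order $q^{5}$; together with Result~\ref{opmer} on small $1$-blocking sets of $\PG(m,q^{3})$ and, for the collinear cases, Results~\ref{gevolg}–\ref{ba}, this should yield the $k=2$ bounds. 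Case (3), with $|B\cap\pi|$ as large as $q+1$, is the tightest and needs the sharpest form of these estimates; it is also where the hypothesis "$1\pmod q$", rather than merely "$1\pmod p$", is genuinely used.
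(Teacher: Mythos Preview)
Your opening count throws away the dominant term. You correctly note that the sets $B\cap S\setminus\pi$ partition $B\setminus\pi$, but then your displayed inequality only records the contribution of the $N$ large $(n-k+1)$-spaces and drops the rest. The point is that \emph{every} $(n-k+1)$-space through $\pi$ is an $s=1$ space and hence meets $B$ in a $1$-blocking set of that space; so even the small ones contribute at least $q^{3}+1-|B\cap\pi|$ new points each. With $(q^{3k}-1)/(q^{3}-1)\approx q^{3k-3}$ such spaces, the small ones alone already account for essentially $q^{3k}$ points of $B$, leaving only about $q^{3k-1}$ of slack against the upper bound from Lemma~\ref{grootte}. Each large space contributes at least $q^{4}-q^{2}-q-3-|B\cap\pi|$ points, i.e.\ roughly $q^{4}-q^{3}$ \emph{above the baseline} $q^{3}$, so $N(q^{4}-q^{3})\lesssim q^{3k-1}$ gives $N\lesssim q^{3k-5}$ directly. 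No induction, no projection, no base case $k=2$ is needed: the paper's proof is exactly this single count
\[
|B|\ \ge\ y\bigl(q^{4}-q^{2}-q-3-|B\cap\pi|\bigr)+\Bigl(\tfrac{q^{3k}-1}{q^{3}-1}-y\Bigr)x+|B\cap\pi|,
\]
with $x=q^{3}$ in cases (1) and (2), and then a substitution of the claimed value of $y$ to reach a contradiction.

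For case (3) you are right that something extra is needed, but it is not Results~\ref{gevolg}--\ref{ba}. The improvement comes from Result~\ref{St}: a small minimal $1$-blocking set in a small $(n-k+1)$-space containing a $(q+1)$-secant cannot be a line or a Baer subplane, so it has at least $q^{3}+q^{2}+1$ points; thus one may take $x=q^{3}+q^{2}-q$ rather than $q^{3}$. That single observation buys the extra factor of $q$ in the bound $3q^{3k-6}-q^{3k-7}-1$. Your proposed machinery (induction on $k$, projecting from a point of $B\cap\pi$, weighted affine blocking sets) is aimed at a difficulty that is not actually there.
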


\begin{proof}

Suppose there are $y$ large $(n-k+1)$-spaces through $\pi$. Then the number of points in $B$ is at least
$$ y(q^4-q^2-q-3-\vert B \cap \pi\vert)+((q^{3k}-1)/(q^3-1)-y)x+\vert B \cap \pi\vert, \ (\ast)$$
where $x$ depends on the intersection $B \cap \pi$.

(1) In this case, $x=q^3$ and $\vert B\cap \pi\vert=1$. If $y=q^{3k-5}+4q^{3k-6}$, then $(\ast)$ is larger than $q^{3k}+q^{3k-1}+q^{3k-2}+3q^{3k-3}$, a contradiction.

(2) In this case $x=q^3$ and $\vert B\cap \pi\vert \leq q^2+q+1$. If $y=q^{3k-5}+5q^{3k-6}$, then $(\ast)$ is larger than $q^{3k}+q^{3k-1}+q^{3k-2}+3q^{3k-3}$, a contradiction.

(3) By Result \ref{St} we know that an $(n-k+1)$-space $\pi'$ through $\pi$ intersects $B$ in at least $q^3+q^2+1$ points, since a $(q+1)$-secant in $\pi'$ implies that the intersection of $\pi'$ with $B$ is non-trivial and not a Baer subplane, hence $x=q^3+q^2-q$, and $\vert B \cap \pi\vert=q+1$. If $3q^{3k-6}-q^{3k-7}$, then $(\ast)$ is larger than $q^{3k}+q^{3k-1}+q^{3k-2}+3q^{3k-3}$, a contradiction.
\end{proof}

\section{The proof of Theorem 1}
In the proof of the main theorem, we distinguish two cases. In both cases we need the following two lemmas.

We continue with the following assumption
\begin{itemize}
\item[(B)]{$B$ is small minimal $k$-blocking set in $\PG(n,q^3)$, $p\geq 7$, intersecting every $(n-k)$-space in $1 \pmod{q}$ points;}
\end{itemize}

and we consider the following properties:
\begin{itemize}
\item[$(H_1)$]{$\forall s<k$: every small minimal $s$-blocking set, intersecting every $(n-s)$-space in $1 \pmod{q}$ points, not containing a $(q\sqrt{q}+1)$-secant, is $\mathbb{F}_q$-linear;}
\item[$(H_2)$]{$\forall s<k$: every small minimal $s$-blocking set, intersecting every $(n-s)$-space in $1 \pmod{q}$ points, containing a $(q\sqrt{q}+1)$-secant, is $\mathbb{F}_{q\sqrt{q}}$-linear.}
\end{itemize}

\begin{lemma} \label{kleine}If $(H_1)$ or $(H_2)$, and $S$ is a small $(n-k+s)$-space, $0< s< k$, then $B \cap S$ is a small minimal linear $s$-blocking set in $S$, and hence $|B\cap S|\leq (q^{3s+1}-1)/(q-1)$.
\end{lemma}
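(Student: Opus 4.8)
The plan is to reduce the statement about a small $(n-k+s)$-space $S$ to an $s$-dimensional analogue of hypothesis (B) inside $S$, and then apply $(H_1)$ or $(H_2)$. First I would observe that since $S$ is small, Lemma~\ref{grootte} gives $|B\cap S| < q^{3s}+q^{3s-1}+q^{3s-2}+3q^{3s-3}$. The key point is that $B\cap S$ is an $s$-blocking set of $S\cong \PG(n-k+s,q^3)$: every $(n-k)$-space of $S$ meets $B$, so it meets $B\cap S$, and such $(n-k)$-spaces are exactly the $((n-k+s)-s)$-spaces of $S$. Moreover every such $(n-k)$-space of $S$ meets $B$, hence $B\cap S$, in $1\pmod q$ points, by hypothesis (B). Then Result~\ref{obs}, applied inside $S$ with $q^3$ in the role of the field order and using the size bound just obtained (which is well below $2(q^3)^s$), shows $B\cap S$ is a \emph{minimal} $s$-blocking set of $S$.

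Next I would verify that $B\cap S$ has a secant structure to which $(H_1)$ or $(H_2)$ applies. By Theorem~\ref{inters}, every line meets $B$ (hence $B\cap S$) in a linear set of $\PG(1,q^3)$, so the possible secant lengths are $0,1,2,3,q+1,q^2+1,q^2+q+1$ together with full lines; in particular the longest non-trivial secant that is a subline is either a $(q+1)$-secant or, in the square case, potentially a $(q\sqrt q+1)$-secant coming from a Baer-subline structure. If $B\cap S$ contains no $(q\sqrt q+1)$-secant, then $B\cap S$ satisfies the hypotheses of $(H_1)$ (a small minimal $s$-blocking set in an $(n-k+s)$-dimensional space over $\mathbb F_{q^3}$, meeting every $(n-s')$-space — here $(n-k)$-space — in $1\pmod q$ points), so it is $\mathbb F_q$-linear; if it does contain such a secant, $(H_2)$ gives that it is $\mathbb F_{q\sqrt q}$-linear. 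Either way $B\cap S$ is a linear $s$-blocking set.

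Finally, once $B\cap S$ is known to be a small minimal linear $s$-blocking set in $S$, the size bound $|B\cap S|\leq (q^{3s+1}-1)/(q-1)$ follows from the standard fact that an $\mathbb F_q$-linear $s$-blocking set of $\PG(n-k+s,q^3)$ arises as $\mathcal B(U)$ for a $(3s)$-dimensional subspace $U$, and the number of spread elements of a Desarguesian $2$-spread met by a $(3s)$-space is at most $[\,3s+1\,\,/\,\,1\,]_q \cdot$ (adjusting) $= (q^{3s+1}-1)/(q-1)$; the $\mathbb F_{q\sqrt q}$-linear case gives an even smaller bound. The main obstacle I anticipate is the bookkeeping needed to apply $(H_1)/(H_2)$ cleanly: one must check that the induction hypotheses are stated for \emph{every} $(n-s')$-space and reconcile that with the fact that inside $S$ the relevant tangent-type spaces are $(n-k)$-spaces, i.e.\ that the parameter $s$ and the ambient dimension drop together correctly, and also ensure that the ``$1\pmod q$'' condition is inherited — but this is exactly hypothesis (B) restricted to subspaces of $S$, so no new input is required beyond careful indexing.
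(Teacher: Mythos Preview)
Your argument follows the paper's approach: $B\cap S$ is an $s$-blocking set in $S$, minimality comes from Result~\ref{obs} (the paper invokes Result~\ref{Sz} to get the $1\pmod p$ condition, while you use hypothesis~(B) directly for $1\pmod q$, which is equally valid since $q\mid q$ implies $1\pmod q\Rightarrow 1\pmod p$), and then one applies $(H_1)$ or $(H_2)$ to obtain linearity and the size bound.

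Your second paragraph, however, is superfluous and contains an error: the values $2$ and $3$ are \emph{not} possible sizes of an $\F$-linear (or $\mathbb F_{q\sqrt q}$-linear) set on a line of $\PG(1,q^3)$---you appear to be conflating Theorem~\ref{inters} with Result~\ref{gevolg}, which concerns the intersection of a \emph{subline} with a linear set, a different question. No analysis of secant structure is needed here: the dichotomy ``$B\cap S$ contains a $(q\sqrt q+1)$-secant or not'' is purely logical, and the paper simply writes ``Now apply $(H_1)$ or $(H_2)$'' without further comment. Note also that your dichotomy tacitly uses \emph{both} hypotheses, while the lemma assumes only one; the paper's terse phrasing has the same feature, the intended reading being that in each of Sections~3.1 and~3.2 the ambient assumption $(B_1)$ or $(B_2)$ forces $B\cap S$ into the case matching the available $(H_i)$.
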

\begin{proof}
Clearly $B\cap S$ is an $s$-blocking set in $S$. Result \ref{Sz} implies that $B\cap S$ intersects every $(n-k+s-s)$-space of $S$ in $1\pmod{p}$ points, and it follows from Result \ref{obs} that $B\cap S$ is minimal. Now apply $(H_1)$ or $(H_2)$.
\end{proof}

\begin{lemma}\label{p-3} Suppose $(H_1)$ or $(H_2)$. Let $k>2$ and let $\pi_{n-2}$ be an $(n-2)$-space such that $B\cap \pi_{n-2}$ is a non-trivial small linear $(k-2)$-blocking set, then there are at least $q^3-q+6$ small hyperplanes through $\pi_{n-2}$.
\end{lemma}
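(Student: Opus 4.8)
\emph{Approach.} There are $q^3+1$ hyperplanes through an $(n-2)$-space of $\PG(n,q^3)$, and by Lemma \ref{grootte} (applied with $s=k-1$) each of them contains either fewer than $q^{3k-3}+q^{3k-4}+q^{3k-5}+3q^{3k-6}$ points of $B$ (it is \emph{small}) or more than $q^{3k-2}-q^{3k-4}-q^{3k-5}-3q^{3k-6}$ points (it is \emph{large}). Thus it suffices to show that at most $q-5$ of these hyperplanes are large. For every hyperplane $H\supseteq\pi_{n-2}$ one has $B\cap H\cap\pi_{n-2}=B\cap\pi_{n-2}$, and the $q^3+1$ sets $H\setminus\pi_{n-2}$ partition $\PG(n,q^3)\setminus\pi_{n-2}$; hence $|B|=\sum_{H\supseteq\pi_{n-2}}|B\cap H|-q^{3}|B\cap\pi_{n-2}|$, the sum running over the hyperplanes through $\pi_{n-2}$.

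\emph{A lower bound for $|B\cap H|$.} The first step is to bound $|B\cap H|$ from below for \emph{every} hyperplane $H$ through $\pi_{n-2}$. Since $B\cap\pi_{n-2}$ is a non-trivial $(k-2)$-blocking set it is minimal (Result \ref{obs}), so there is a tangent $(n-k)$-space $\mu\subseteq\pi_{n-2}$, say $\mu\cap B=\{P\}$; choosing a hyperplane $\nu$ of $\mu$ with $P\notin\nu$ gives an $(n-k-1)$-space $\nu\subseteq\pi_{n-2}$ with $\nu\cap B=\emptyset$. Inside $H$ the $q^{3k-3}$ $(n-k)$-spaces through $\nu$ that meet $\pi_{n-2}$ only in $\nu$ partition $H\setminus\pi_{n-2}$; each is an $(n-k)$-space of $\PG(n,q^3)$, hence meets $B$, necessarily in $H\setminus\pi_{n-2}$. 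This already gives $|B\cap H|\geq|B\cap\pi_{n-2}|+q^{3k-3}$, but this is not quite enough. To sharpen it I would use that $B\cap H$ is in addition a \emph{non-trivial} $(k-1)$-blocking set of $H\cong\PG(n-1,q^3)$: a $(k-1)$-space inside $B\cap H$ would meet the hyperplane $\pi_{n-2}$ of $H$ in a $(k-2)$-space contained in $B\cap\pi_{n-2}$, contradicting non-triviality. When $H$ is small, Result \ref{obs} together with $(H_1)$ or $(H_2)$ forces $B\cap H$ to be a non-trivial small minimal \emph{linear} $(k-1)$-blocking set, whence by the structure of linear blocking sets $|B\cap H|\geq\frac{q^{3k}-1}{q^{3}-1}+q^{3k-4}$; and when $H$ is large $|B\cap H|>q^{3k-2}-q^{3k-4}-q^{3k-5}-3q^{3k-6}$, which is much larger. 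So the estimate $|B\cap H|\geq\frac{q^{3k}-1}{q^{3}-1}+q^{3k-4}$ holds for all hyperplanes through $\pi_{n-2}$.

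\emph{Counting.} By Lemma \ref{grootte} one has $|B|<q^{3k}+q^{3k-1}+q^{3k-2}+3q^{3k-3}$ and, applying it inside $\pi_{n-2}$, $|B\cap\pi_{n-2}|<q^{3k-6}+q^{3k-7}+q^{3k-8}+3q^{3k-9}$. Suppose for contradiction that at least $q-4$ hyperplanes through $\pi_{n-2}$ are large. Substituting the bound $|B\cap H|>q^{3k-2}-q^{3k-4}-q^{3k-5}-3q^{3k-6}$ for these, the bound $|B\cap H|\geq\frac{q^{3k}-1}{q^{3}-1}+q^{3k-4}$ for the remaining $q^3-q+5$ hyperplanes, and the above bound on $|B\cap\pi_{n-2}|$, into $|B|=\sum_{H}|B\cap H|-q^{3}|B\cap\pi_{n-2}|$, a direct estimate shows $|B|\geq q^{3k}+q^{3k-1}+q^{3k-2}+3q^{3k-3}$ as soon as $q\geq 7$ — the hypothesis $q\geq 7$ is precisely what lets the $q^{3k-4}$-contributions of the $q^3-q+5$ non-large hyperplanes absorb the loss coming from the $q-4$ large ones. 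This contradicts $|B|<q^{3k}+q^{3k-1}+q^{3k-2}+3q^{3k-3}$, so at most $q-5$ hyperplanes through $\pi_{n-2}$ are large, i.e.\ at least $q^3-q+6$ are small.

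\emph{Main obstacle.} I expect the delicate point to be the lower bound $|B\cap H|\geq\frac{q^{3k}-1}{q^{3}-1}+q^{3k-4}$. The partition argument only yields $|B\cap H|\geq|B\cap\pi_{n-2}|+q^{3k-3}$, which falls short of what the count needs by a term of order $q^{3k-4}$; recovering this term genuinely uses that $B\cap\pi_{n-2}$ is non-trivial (so that $B\cap H$ is non-trivial), together with the classification of small minimal $(k-1)$-blocking sets meeting every $(n-k)$-space in $1\pmod q$ points as linear sets (via Result \ref{obs}, $(H_1)$, $(H_2)$) and the fact that non-trivial linear $(k-1)$-blocking sets are correspondingly larger than the absolute minimum $\frac{q^{3k}-1}{q^3-1}$.
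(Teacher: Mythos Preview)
Your overall counting strategy is exactly the paper's: assume at least $q-4$ of the $q^3+1$ hyperplanes through $\pi_{n-2}$ are large, combine lower bounds for $|B\cap H|$ on the two kinds of hyperplanes with an upper bound for $|B\cap\pi_{n-2}|$, and exceed the upper bound $|B|<q^{3k}+q^{3k-1}+q^{3k-2}+3q^{3k-3}$ from Lemma~\ref{grootte}. The paper even uses the same threshold $q-4$. Your explicit argument that $B\cap H$ must be non-trivial (via intersecting a putative $(k-1)$-space with $\pi_{n-2}$) is a nice point that the paper leaves implicit.

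The divergence is precisely at what you flag as the ``main obstacle'': the lower bound on $|B\cap H|$ for small hyperplanes. The paper does \emph{not} go through linearity here. Instead it invokes \cite[Corollary~3.7]{sz} directly for the (non-trivial, small minimal) $(k-1)$-blocking set $B\cap H$, obtaining
\[
|B\cap H|\ \geq\ q^{3k-3}+1+\frac{q^{3k-3}+q}{q+1},
\]
and then runs the count with this value. This sidesteps any structural analysis of linear sets.

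Your route via $(H_1)$/$(H_2)$ and ``the structure of linear blocking sets'' to the bound $|B\cap H|\geq\frac{q^{3k}-1}{q^3-1}+q^{3k-4}$ is left as an assertion, and it is problematic under $(H_2)$: an $\mathbb{F}_{q\sqrt{q}}$-linear non-trivial $(k-1)$-blocking set can be a Baer subgeometry of size $\frac{(q\sqrt{q})^{2k-1}-1}{q\sqrt{q}-1}$, which for $k=2$ is $q^3+q\sqrt{q}+1<q^3+q^2+1$, violating your claimed inequality. Moreover, under $(H_2)$ alone you cannot even conclude that $B\cap H$ is linear unless you first know it contains a $(q\sqrt{q}+1)$-secant. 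So the linearity detour does not close the gap; the clean fix is to cite the Sz\H{o}nyi--Weiner bound as the paper does. (For the upper bound on $|B\cap\pi_{n-2}|$ the paper uses Lemma~\ref{kleine} to get $\leq\frac{q^{3k-5}-1}{q-1}$ rather than Lemma~\ref{grootte}; either suffices for the count.)
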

\begin{proof} Applying Lemma \ref{kleine} with $s=k-2$, it follows that $B\cap \pi_{n-2}$ contains at most $(q^{3k-5}-1)/(q-1)$ points. On the other hand, from Lemmas \ref{grootte} and \ref{kleine} with $s=k-1$, we know that a hyperplane intersects $B$ in at most $(q^{3k-2}-1)/(q-1)$ points or in more than $q^{3k-2}-q^{3k-4}-q^{3k-5}-3q^{3k-6}$ points. 
In the first case, a hyperplane $H$ intersects $B$ in at least $q^{3k-3}+1+(q^{3k-3}+q)/(q+1)$ points, using a result of Sz\H{o}nyi and Weiner \cite[Corollary 3.7]{sz} for the $(k-1)$-blocking set $H\cap B$. 
If there are at least $q-4$ large hyperplanes, then the number of points in $B$ is at least
$$(q-4)(q^{3k-2}-q^{3k-4}-q^{3k-5}-3q^{3k-6}-\frac{q^{3k-5}-1}{q-1})+$$
$$(q^3-q+5)(q^{3k-3}+1+\frac{q^{3k-3}+q}{q+1}-\frac{q^{3k-5}-1}{q-1})+\frac{q^{3k-5}-1}{q-1},$$ which is larger than $q^{3k}+q^{3k-1}+q^{3k-2}+3q^{3k-3}$ if $q\geq7$, a contradiction. Hence, there are at most $q-5$ large hyperplanes through $\pi_{n-2}$.
\end{proof}

\subsection{Case 1: there are no $q\sqrt{q}+1$-secants}
In this subsection, we will use induction on $k$ to prove that small minimal $k$-blocking sets in $\PG(n,q^3)$, intersecting every $(n-k)$-space in $1\pmod{q}$ points and not containing a $(q\sqrt{q}+1)$-secant, are $\F$-linear. The induction basis is Theorem \ref{basis}. We continue with assumptions $(H_1)$ and
\begin{itemize}
\item[$(B_1)$]{$B$ is small minimal $k$-blocking set in $\PG(n,q^3)$, $p\geq 7$, intersecting every $(n-k)$-space in $1 \pmod{q}$ points, not containing a $(q\sqrt{q}+1)$-secant.}
\end{itemize}






\begin{lemma}\label{situatie} If $B$ is non-trivial, there exist a point $P\in B$, a tangent $(n-k)$-space $\pi$ at the point $P$  and  small $(n-k+1)$-spaces $H_i$, through $\pi$, such that there is a $(q+1)$-secant through $P$ in $H_i$, $i=1,\ldots,q^{3k-3}-2q^{3k-4}$.
\end{lemma}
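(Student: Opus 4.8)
The plan is to produce, via a counting argument, a point $P \in B$ together with a tangent $(n-k)$-space $\pi$ through it that lies on many small $(n-k+1)$-spaces, each of which carries a $(q+1)$-secant through $P$. First I would invoke Lemma \ref{extra}(1) to fix, for any $(q+1)$-secant line $L$ of $B$ (which exists since $B$ is non-trivial and every line meeting $B$ twice meets it in at least $q+1$ points by Result \ref{Sz}), an $(n-k)$-space $\pi_L$ with $B \cap \pi_L = B \cap L$. This is \emph{not} a tangent space, so the first task is to extract a tangent $(n-k)$-space from the geometry near a secant: take a point $P$ on a secant line $L$, and look at the $(n-k)$-spaces inside a suitable small $(n-k+1)$-space on $\pi_L$ (such a small $(n-k+1)$-space exists by the counting in the proof of Theorem \ref{inters}, or by Lemma \ref{hypervlakken}) that pass through $P$ but avoid the other $q$ points of $L$; since $B \cap (\text{that } (n-k+1)\text{-space})$ is a small linear $1$-blocking set by Theorem \ref{basis}, its tangent $(n-k)$-spaces at $P$ are abundant, and at least one of them, $\pi$, is tangent to all of $B$.

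Next I would count small $(n-k+1)$-spaces through the chosen tangent $(n-k)$-space $\pi$ that contain a $(q+1)$-secant through $P$. The total number of $(n-k+1)$-spaces through $\pi$ is $(q^{3k}-1)/(q^3-1)$. By Lemma \ref{hypervlakken}(1), since $B \cap \pi$ is a single point, at most $q^{3k-5}+4q^{3k-6}-1$ of these are large; so at least $(q^{3k}-1)/(q^3-1) - q^{3k-5}-4q^{3k-6}+1$ of them are small. Each small $(n-k+1)$-space $H$ through $\pi$ meets $B$ in a small minimal $1$-blocking set of $H$ (Result \ref{Sz}, Result \ref{obs}), which is linear by Theorem \ref{basis}, hence has at least $q^3+1$ points; since $B\cap \pi$ is a single point, $H$ contains at least $q^3$ points of $B$ outside $\pi$. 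The remaining step is to show that, among these small $H$'s, the number carrying a $(q+1)$-secant through $P$ is at least $q^{3k-3}-2q^{3k-4}$. For this I would bound from above the number of small $H$'s through $\pi$ in which \emph{no} line through $P$ is a $(q+1)$-secant: in such an $H$, the linear $1$-blocking set $B \cap H$ has $P$ as a point lying only on tangent lines within $B\cap H$, which forces $B\cap H$ to be of a restricted linear type (essentially, the "extra" points of $B$ in $H$ are spread out so that the cone from $P$ over them covers many lines), and a double count of the pairs (point of $B\setminus\pi$, small $H$ through $\pi$ containing it) against the size bound $|B| < q^{3k}+q^{3k-1}+q^{3k-2}+3q^{3k-3}$ from Lemma \ref{grootte} caps the bad $H$'s well below the threshold.

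The main obstacle I anticipate is the last step: converting "$P$ lies on no $(q+1)$-secant inside the small $(n-k+1)$-space $H$" into a usable upper bound on the number of such $H$. The clean way is to note that if $P$ lies on no long secant in $H$, then every line of $H$ through $P$ meets $B \cap H$ only in $P$, so the $q^3$-or-more points of $(B\cap H)\setminus\{P\}$ all lie off the lines through $P$ — impossible in a plane, and in higher dimension it forces these points to fill up essentially a full $(n-k)$-subspace of $H$ missing $P$, i.e. $|B\cap H|$ is large, contradicting smallness of $H$ once $q \geq 7$. Thus in fact \emph{every} small $(n-k+1)$-space through $\pi$ already contains a $(q+1)$-secant through $P$, and the count $(q^{3k}-1)/(q^3-1) - q^{3k-5} - 4q^{3k-6} + 1 \geq q^{3k-3} - 2q^{3k-4}$ (valid for $q \geq 7$) finishes the proof. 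The care needed is in the planar base case of that sub-argument and in checking the final numerical inequality.
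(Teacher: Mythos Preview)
Your final step contains a genuine gap. You claim that if, in a small $(n-k+1)$-space $H$ through the tangent space $\pi$, the point $P$ lies on no $(q+1)$-secant of $B\cap H$, then every line of $H$ through $P$ meets $B\cap H$ only in $P$. This is false: the linear $1$-blocking set $B\cap H=\mathcal{B}(\pi')$ may be a full line through $P$ (the trivial case), or the $3$-space $\pi'$ may meet $\mathcal{S}(P)$ in a line rather than a point, in which case every secant through $P$ inside $H$ is a $(q^2+1)$- or $(q^2+q+1)$-secant and no $(q+1)$-secant through $P$ exists at all. So one cannot conclude that every small $(n-k+1)$-space through $\pi$ already carries a $(q+1)$-secant through $P$, and your final count collapses.

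The paper avoids this by a two-stage count. First it fixes a $(q+1)$-secant $M$, takes $P\in M$, and counts $(q+1)$-secants through $P$ \emph{globally} by running over small $(n-k+1)$-spaces through an $(n-k)$-space $\pi_M$ with $B\cap\pi_M=M$: each such space meets $B$ in a non-trivial linear $1$-blocking set containing $M$, and in that set $P$ lies on at least $q^2-1$ further $(q+1)$-secants (since the presence of the $(q+1)$-secant $M$ forces $\pi'\cap\mathcal{S}(P)$ to be a point). This yields roughly $q^{3k-1}$ $(q+1)$-secants through $P$ in total. It also bounds the number of full lines through $P$. Only then does it pass to a tangent $(n-k)$-space $\pi$ at $P$ (obtained directly from minimality of $B$, not via your detour through tangent spaces of $B\cap H$), bounds the number of $(q+1)$-secants through $P$ that can fall into the few large $(n-k+1)$-spaces through $\pi$ by a point count, and divides the remaining $(q+1)$-secants by the maximum number ($q^2+q+1$) that a single small space can absorb. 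The missing idea in your plan is precisely this global count of $(q+1)$-secants through $P$ before redistributing them over the pencil through $\pi$.
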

\begin{proof} Since $B$ is non-trivial, there is at least one line $N$ with $1<\vert N \cap B\vert <q^3+1$. Lemma \ref{extra} shows that there is an $(n-k)$-space $\pi_N$ through $N$ such that $B\cap N=B\cap \pi_N$.  It follows from Theorem \ref{inters} and Lemma \ref{hypervlakken} that there is at least one $(n-k+1)$-space $H$ through $\pi_N$ such that $H \cap B $ is a small minimal linear $1$-blocking set of $H$. In this non-trivial small minimal linear $1$-blocking set, there are $(q+1)$-secants (see Result \ref{St}).  Let $M$ be one of those $(q+1)$-secants of $B$. Again using Lemma \ref{extra}, we find an $(n-k)$-space $\pi_M$ through $M$ such that $B\cap M=B\cap \pi_M$.

Lemma \ref{hypervlakken}(3) shows that through $\pi_M$, there are at least $\frac{q^{3k}-1}{q^3-1}-3q^{3k-6}+q^{3k-7}+1$ small $(n-k+1)$-spaces. Let $P$ be a point of $M$. Since in each of these intersections, $P$ lies on at least $q^2-1$ other $(q+1)$-secants, a point $P$ of $M$ lies in total on at least $(q^2-1)(\frac{q^{3k}-1}{q^3-1}-3q^{3k-6}+q^{3k-7}+1)$ other $(q+1)$-secants. Since each of the $\frac{q^{3k}-1}{q^3-1}-3q^{3k-6}+q^{3k-7}+1$ small $(n-k+1)$-spaces contains at least $q^3+q^2-q$ points of $B$ not on $M$, and $\vert B \vert <q^{3k}+q^{3k-1}+q^{3k-2}+3q^{3k-3}$ (see Lemma \ref{grootte}), there are less than $2q^{3k-2}+6q^{3k-3}$ points of $B$ left in the large $(n-k+1)$-spaces. Hence, $P$ lies on less than $2q^{3k-5}+6q^{3k-6}$ full lines.

Since $B$ is minimal, $P$ lies on a tangent $(n-k)$-space $\pi$. There are at most $q^{3k-5}+4q^{3k-6}-1$ large $(n-k+1)$-spaces through $\pi$ (Lemma \ref{hypervlakken}(1)). Moreover, since at least $\frac{q^{3k}-1}{q^3-1}-(q^{3k-5}+4q^{3k-6}-1)-(2q^{3k-5}+6q^{3k-6})$ $(n-k+1)$-spaces through $\pi$ contain at least $q^3+q^2$ points of $B$, and at most $2q^{3k-5}+6q^{3k-6}$ of the small $(n-k+1)$-spaces through $\pi$ contain exactly $q^3+1$ points of $B$, there are at most $2q^{3k-2}+23q^{3k-3}$ points of $B$ left. Hence, $P$ lies on at most $2q^{3k-3}+23q^{3k-4}$ $(q+1)$-secants of the large $(n-k+1)$-spaces through $\pi$. This implies that there are at least 
$(q^2-1)(\frac{q^{3k}-1}{q^3-1}-3q^{3k-6}+q^{3k-7}+1)-(2q^{3k-3}+23q^{3k-4})$
 $(q+1)$-secants through $P$ left in small $(n-k+1)$-spaces through $\pi$. Since in a small $(n-k+1)$-space through $\pi$, there can lie at most $q^2+q+1$ $(q+1)$-secants through $P$, this implies that there are at least $q^{3k-3}-2q^{3k-4}$ $(n-k+1)$-spaces $H_i$ through $\pi$ such that $P$ lies on a $(q+1)$-secant in $H_i$.
\end{proof}

\begin{lemma} \label{ess} Let $\pi$ be an $(n-k)$-dimensional tangent space of $B$ at the point $P$. Let $H_1$ and $H_2$ be two $(n-k+1)$-spaces through $\pi$ for which $B\cap H_i=\mathcal{B}(\pi_i)$, for some $3$-space $\pi_i$ through $x\in \mathcal{S}(P)$, $\mathcal{B}(x)\cap \pi_i=\lbrace x \rbrace$ $(i=1,2)$ and $\mathcal{B}(\pi_i)$ not contained in a line of $\PG(n,q^3)$.
Then $\mathcal{B}(\langle \pi_1,\pi_2\rangle)\subseteq B$.

\end{lemma}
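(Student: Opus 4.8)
The plan is to work entirely on the $\F$-linear side, i.e. inside $\PG(5k-1,q)$ (or more precisely inside the subspace $\langle \mathcal{S}(H_1),\mathcal{S}(H_2)\rangle$), where $H_1,H_2$ are the two $(n-k+1)$-spaces through the tangent space $\pi$. Field reduction sends $\pi$ to a $(3(n-k+1)-1)$-dimensional subspace $\mathcal{S}(\pi)$ of the big space that is a union of Desarguesian spread elements, and sends $H_i$ to a subspace $\mathcal{S}(H_i)$ of dimension $3(n-k+2)-1$, again a union of spread elements, with $\mathcal{S}(H_i)\supset \mathcal{S}(\pi)$. Since $B\cap H_i=\mathcal{B}(\pi_i)$ with $\pi_i$ a $3$-space meeting the spread element $x=\mathcal{S}(P)$ only in the point $x$, the key linear-algebraic fact to establish first is that, after replacing each $\pi_i$ by a suitable representative through $x$ (using the transitivity of the spread stabiliser noted in the preliminaries), one may assume $x\subset\langle\pi_1,\pi_2\rangle$ but $\pi_i\cap x=\{x\}$ for a fixed point $x\in\mathcal{S}(P)$; here $\pi$ being a tangent space forces $\mathcal{S}(\pi)\cap\mathcal{B}^{-1}(B)$ to reduce, along $H_i$, to exactly the spread element $\mathcal{S}(P)$, so the two $3$-spaces $\pi_1,\pi_2$ genuinely share the single point $x$ and nothing more of the spread.

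Next I would set $\Pi:=\langle\pi_1,\pi_2\rangle$ and determine its dimension: since $\pi_1\cap\pi_2=\{x\}$ (a single point), $\Pi$ is a $7$-space, and $\mathcal{B}(\Pi)$ is therefore a linear set of rank $\le 8$ living inside the plane-like configuration $\mathcal{B}(\langle\mathcal{S}(H_1),\mathcal{S}(H_2)\rangle)$, which corresponds to at most a plane $\mu\cong\PG(2,q^3)$ of $\PG(n,q^3)$ spanned by $L$ and one further point. The goal "$\mathcal{B}(\Pi)\subseteq B$" then becomes: every spread element meeting $\Pi$ actually meets $B^{*}:=\mathcal{B}^{-1}(B\cap\mu)$. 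To prove this I would use the $1\pmod q$ intersection property of $B$ together with Result \ref{Sz}: count, inside the plane $\mu$, the lines through a putative new point $R\in\mathcal{B}(\Pi)\setminus B$; each such line meets $H_1\cap\mu$ and $H_2\cap\mu$ in linear sets (sublines or $(q^2+1)$- or $(q^2+q+1)$-sets), and Results \ref{gevolg}, \ref{ba}, describing how a subline meets an $\F$-linear set, pin down these intersections tightly. Combined with the fact that $\mathcal{B}(\pi_1)$ and $\mathcal{B}(\pi_2)$ are themselves full linear $1$-blocking sets of $H_1\cap\mu$, $H_2\cap\mu$ (not contained in a line, by hypothesis), a short counting argument in $\mu$ shows that $B\cap\mu$ already contains all of $\mathcal{B}(\Pi)$ — otherwise $|B\cap\mu|$, or $|B|$ via Lemma \ref{grootte}, would exceed the allowed bound.

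The main obstacle I anticipate is the \emph{linear-algebraic normalisation} in the first step: one must show that the two independently-chosen $3$-spaces $\pi_1,\pi_2$ can be simultaneously taken to pass through a \emph{common} point $x$ of $\mathcal{S}(P)$ with $\pi_i\cap\mathcal{S}(P)=\{x\}$, so that $\langle\pi_1,\pi_2\rangle$ is well-behaved and $\mathcal{B}(\langle\pi_1,\pi_2\rangle)$ is the linear set one expects. This is exactly where the tangency of $\pi$ is essential — without it, $\mathcal{S}(\pi)$ could meet $B^{*}$ in more than the single spread element $\mathcal{S}(P)$, the two $3$-spaces could meet $\mathcal{S}(P)$ in different points, and $\langle\pi_1,\pi_2\rangle$ would no longer reduce to a linear set of the predicted rank. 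Once this normalisation is in place, the remaining work — computing $\dim\langle\pi_1,\pi_2\rangle$ and running the counting argument in the plane $\mu$ using Results \ref{gevolg} and \ref{ba} — is routine, if slightly tedious, bookkeeping with Gaussian coefficients of the same flavour as Lemma \ref{grootte} and Lemma \ref{hypervlakken}.
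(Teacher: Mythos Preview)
Your plan has a genuine structural gap. The central step --- reducing the problem to a counting argument inside a single plane $\mu\cong\PG(2,q^3)$ --- cannot work, because no such plane exists. By hypothesis each $\mathcal{B}(\pi_i)$ is \emph{not} contained in a line of $\PG(n,q^3)$; since $B\cap H_i$ is a small minimal $1$-blocking set of $H_i$, it spans a plane of $H_i$. These two planes meet only in $P$ (as $\pi=H_1\cap H_2$ is tangent), so $\langle\mathcal{B}(\pi_1),\mathcal{B}(\pi_2)\rangle$ is a $4$-space of $\PG(n,q^3)$, not a plane. More generally $\langle H_1,H_2\rangle$ is an $(n-k+2)$-space, so your identification of $\mathcal{B}(\langle\mathcal{S}(H_1),\mathcal{S}(H_2)\rangle)$ with a plane is simply false. (Incidentally, $\dim\langle\pi_1,\pi_2\rangle=3+3-0=6$, not $7$; and the ``normalisation'' you worry about is already part of the hypotheses of the lemma --- both $\pi_i$ are assumed to pass through the fixed point $x\in\mathcal{S}(P)$ with $\pi_i\cap\mathcal{S}(P)=\{x\}$.)

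What is missing is the mechanism the paper actually uses: the induction hypothesis $(H_1)$. The argument does \emph{not} stay inside $\langle H_1,H_2\rangle$. Instead, for a line $M\subset\pi_1$ through $x$ and a point $s\in\pi_2$, one extends $\langle\mathcal{B}(M)\rangle$ to an $(n-2)$-space $\pi_M$ meeting $B$ in a small $(k-2)$-blocking set (Lemma~\ref{extra}), and then uses Lemmas~\ref{hypervlakken} and~\ref{p-3} to find at least four points $t_i$ on a line $T\ni s$ in $\pi_2$ for which the hyperplane $\langle\pi_M,\mathcal{B}(t_i)\rangle$ is small. By $(H_1)$ the intersection $B\cap\langle\pi_M,\mathcal{B}(t_i)\rangle$ is $\F$-linear, which forces $\mathcal{B}(\langle M,t_i\rangle)\subseteq B$. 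Only now does Result~\ref{gevolg} enter: a subline $\mathcal{B}(L')$ in $\langle M,T\rangle$ already hits $B$ in at least four points, hence lies in $B$. Varying $M$ and $s$ gives all of $\mathcal{B}(\langle\pi_1,\pi_2\rangle)$. Your proposed counting in a plane never produces the linear structure needed to invoke Result~\ref{gevolg}; that structure comes from the induction hypothesis applied to carefully chosen hyperplanes, and this step is absent from your plan.
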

\begin{proof}
Since $\langle \mathcal{B}(\pi_i)\rangle$ is not contained in a line of $\PG(n,q^3)$, there is at most one element $Q$ of $\mathcal{B}(\pi_i)$ such that $\langle \mathcal{S}(P),Q\rangle$ intersects $\pi_i$ in a plane. If there is such a plane, then we denote its pointset by $\mu_i$, otherwise we put $\mu_i=\emptyset$.

Let $M$ be a line through $x$ in $\pi_1\setminus \mu_1$, let $s\neq x$ be a point of $\pi_2\setminus \mu_2$, and note that $\mathcal{B}(s)\cap \pi_2=\lbrace s \rbrace$.

We claim that there is a line $T$ through $s$ in $\pi_2$ and an $(n-2)$-space $\pi_M$ through $\langle\mathcal{B}(M)\rangle$ such that there are at least $4$ points $t_i\in T, t_i\notin \mu_2$, such that $\langle \pi_M, \mathcal{B}(t_i)\rangle$ is small and hence has a linear intersection with $B$, with $B\cap \pi_M=M$ if $k=2$ and $B\cap \pi_M$ is a small minimal $(k-2)$-blocking set if $k>2$.

If $k=2$, the existence of $\pi_M$ follows from Lemma \ref{extra}(1), and we know from Lemma \ref{hypervlakken}(1) that there are at most $q+3$ large hyperplanes through $\pi_M$. Denote the set of points of $\mathcal{B}(\pi_2)$, contained in one of those hyperplanes by $F$. Hence, if $Q$ is a point of $\mathcal{B}(\pi_2)\setminus F$, $\langle Q,\pi_M \rangle$ is a small hyperplane.

Let $T_1$ be a line through $s$ in $\pi_2\setminus \mu_2$ and not through $x$, and suppose that $\mathcal{B}(T_1)$ contains at least $q-3$ points of $F$.

Let $T_2$ be a line in $\pi_2\setminus \mu_2$, through $s$, not in $\langle x, T_1\rangle$, not through $x$. There are at most $q+3-(q-3)$ reguli through $x$ of $\mathcal{S}(F)$, not in $\langle x,T_1\rangle$, and if $\mu\neq \emptyset$ one element of $\mathcal{B}(\mu_2)$ is contained $\mathcal{B}(T_2)$. Since it is possible that $\mathcal{B}(s)$ is an element of $F$, this gives in total at most $8$ points of $\mathcal{B}(T_2)$ that are contained in $F$. This implies, if $q> 11$, that at least 5 of the hyperplanes $\lbrace \langle \pi_M,\mathcal{B}(t)\rangle||t \in T_2 \rbrace$ are small.

If $q=11$, it is possible that $\mathcal{B}(T_2)$ contains at least $8$ points of $F$. If $T_3$ is a line in $\pi_2\setminus \mu_2$, through $s$, $\langle x,T_1\rangle$, $\langle x,T_2\rangle$ and not through $x$, then there are at least $5$ points $t$ of $T_3$ such that $\langle \pi_M,\mathcal{B}(t)\rangle$ is a small hyperplane.

If $q=7$ and if $\mathcal{B}(s)\in \mathcal{B}(F)$, it is possible that $\mathcal{B}(T_2)$,$\mathcal{B}(T_3)$, and $\mathcal{B}(T_4)$, with $T_i$ a line through $s$ in $\pi_2\setminus \mu_2$, not in $\langle x,T_j\rangle$, $j<i$, not through $x$, contain 4 points of $F$. A fifth line $T_5$ through $s$ in $\pi_2\setminus\mu_2$, not in $\langle x,T_j\rangle$, $j<i$, not through $x$, contains at least 5 points $t$ such that $\langle \pi_M,\mathcal{B}(t)\rangle$ is a small hyperplane.

If $k>2$, let $T$ be a line through $s$ in $\pi_2\setminus \mu_2$, not through $x$. It follows from Lemma \ref{extra}(2) that there is an $(n-2)$-space $\pi_M$ through $\langle\mathcal{B}(M)\rangle$ such that $B\cap \pi_M$ is a small minimal $(k-2)$-blocking set of $\PG(n,q^3)$, skew to $\mathcal{B}(T)$. Lemma \ref{p-3} shows that at most $q-5$ of the hyperplanes through $\pi_M$ are large. This implies that at least 5 of the hyperplanes $\lbrace \langle\pi_M, \mathcal{B}(t)\rangle||t \in \mathcal{B}(T) \rbrace$ are small. This proves our claim.

Since $B\cap \langle \mathcal{B}(t_i),\pi_M\rangle$ is linear, also the intersection of $\langle \mathcal{B}(t_i),\mathcal{B}(M)\rangle$ with $B$ is linear, i.e., there exist subspaces $\tau_i$, $\tau_i\cap \mathcal{S}(P)=\lbrace x \rbrace$, such that $\mathcal{B}(\tau_i)=\langle \mathcal{B}(t_i),\mathcal{B}(M)\rangle \cap B$. Since $\tau_i \cap \langle \mathcal{B}(M)\rangle$ and $M$ are both transversals through $x$ to the same regulus $\mathcal{B}(M)$, they coincide, hence $M\subseteq \tau_i$. The same holds for $\tau_i\cap \langle \mathcal{B}(t_i),\mathcal{S}(P)\rangle$, implying $t_i\in \tau_i$. We conclude that $\mathcal{B}(\langle M,t_i\rangle)\subseteq \mathcal{B}(\tau_i)\subseteq B$.

We show that $\mathcal{B}(\langle M,T\rangle)\subseteq B$. Let $L'$ be a line of $\langle M,T\rangle$, not intersecting $M$. The line $L'$ intersects the planes $\langle M,t_i\rangle$ in points $p_i$ such that $\mathcal{B}(p_i)\in B$. Since $\mathcal{B}(L')$ is a subline intersecting $B$ in at least 4 points, Result \ref{gevolg} shows that $\mathcal{B}(L')\subset B$. Since every point of the space $\langle M,T\rangle$ lies on such a line $L'$, $\mathcal{B}(\langle M,T\rangle)\subseteq B$. 

Hence, $\mathcal{B}(\langle M,s\rangle)\subseteq B$ for all lines $M$ through $x$, $M$ in $\pi_1\setminus \mu_1$, and all points $s\neq x\in\pi_2\setminus \mu_2$, so $\mathcal{B}(\langle \pi_1,\pi_2\rangle\setminus (\langle\mu_1,\pi_2\rangle\cup \langle\mu_2,\pi_1\rangle))\subseteq B$. Since every point of $\langle\mu_1,\pi_2\rangle\cup \langle\mu_2,\pi_1\rangle$ lies on a line $N$ with $q-1$ points of $\langle \pi_1,\pi_2\rangle\setminus (\langle\mu_1,\pi_2\rangle\cup \langle\mu_2,\pi_1\rangle)$, Result \ref{gevolg} shows that $\mathcal{B}(N)\subset B$. We conclude that $\mathcal{B}(\langle\pi_1,\pi_2\rangle)\subseteq B$.
\end{proof}

\begin{theorem}\label{H1} The set $B$ is $\mathbb{F}_q$-linear.
\end{theorem}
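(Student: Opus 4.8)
The plan is to reconstruct $B$ globally as an $\F$-linear set from the abundant supply of small $(n-k+1)$-spaces produced by Lemma~\ref{situatie}, gluing the linear pieces living inside them by means of Lemma~\ref{ess}. First dispose of the trivial case: if $B$ contains a $k$-space then, being minimal, $B$ \emph{is} that $k$-space, which is $\F_{q^3}$-linear and a fortiori $\F$-linear. So assume $B$ is non-trivial and apply Lemma~\ref{situatie}: there is a point $P\in B$, a tangent $(n-k)$-space $\pi$ at $P$, and at least $q^{3k-3}-2q^{3k-4}$ small $(n-k+1)$-spaces $H_1,H_2,\ldots$ through $\pi$, each carrying a $(q+1)$-secant through $P$. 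For each $i$, $B\cap H_i$ is a $1$-blocking set of $H_i$ (since $\pi$ is a hyperplane of $H_i$ and every $(n-k)$-space meets $B$); it is small because $H_i$ is small, it meets every hyperplane of $H_i$ in $1\pmod q$ points (these hyperplanes are $(n-k)$-spaces of $\PG(n,q^3)$), and it is minimal by Result~\ref{obs}. Hence by Theorem~\ref{basis} it is an $\F$-linear set $\mathcal{B}(\pi_i)$ for a $3$-space $\pi_i$ of the field reduction $\PG(3(n+1)-1,q)$ of $\PG(n,q^3)$ over $\F$, and since $H_i$ has a $(q+1)$-secant, $\mathcal{B}(\pi_i)$ is not contained in a line of $\PG(n,q^3)$. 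Using the transitivity of the elementwise stabiliser of $\mathcal D$ on the points of $\mathcal{S}(P)$ we may take all the $\pi_i$ through one fixed point $x\in\mathcal{S}(P)$, and then $\mathcal{S}(P)\cap\pi_i=\{x\}$ because $\mathcal{B}(\pi_i)$ is not on a line. Thus each pair $H_i,H_j$ is exactly in the situation of Lemma~\ref{ess}, so $\mathcal{B}(\langle\pi_i,\pi_j\rangle)\subseteq B$; moreover $H_i\cap H_j=\pi$ forces $\pi_i\cap\pi_j\subseteq\mathcal{S}(P)$, hence $\pi_i\cap\pi_j=\{x\}$ and $\dim\langle\pi_i,\pi_j\rangle=6$.

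The next step is to bootstrap from pairs to the whole span. Reordering the $H_i$, consider the flag of $(n-k+j)$-spaces $S_j:=\langle H_1,\ldots,H_j\rangle$. Using the counting lemmas (Lemmas~\ref{grootte}, \ref{hypervlakken}, \ref{p-3}) together with $|B|<q^{3k}+q^{3k-1}+q^{3k-2}+3q^{3k-3}$, one selects the $H_i$ so that each $S_j$ with $j\le k-1$ is small; then Lemma~\ref{kleine} (this is where hypothesis $(H_1)$ enters) says $B\cap S_j$ is a small minimal $\F$-linear $j$-blocking set $\mathcal{B}(\sigma_j)$ of $S_j$. Since $\mathcal{B}(\pi_i)=B\cap H_i\subseteq B\cap S_j=\mathcal{B}(\sigma_j)$ for $i\le j$, one pins down the representative $\sigma_j$ and forces $\pi_i\subseteq\sigma_j$ by the transversal-to-regulus rigidity used at the end of the proof of Lemma~\ref{ess} (all the subspaces in play pass through $x$ and meet $\mathcal{S}(P)$ only in $x$). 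Arranging the $H_i$ in general position — which the huge supply from Lemma~\ref{situatie} permits — gives $H_j\cap S_{j-1}=\pi$, hence $\pi_j\cap\langle\pi_1,\ldots,\pi_{j-1}\rangle=\{x\}$ and $\dim\langle\pi_1,\ldots,\pi_j\rangle=3j$; comparing dimensions with $\sigma_j$ yields $\sigma_j=\langle\pi_1,\ldots,\pi_j\rangle$ and so $B\cap S_j=\mathcal{B}(\langle\pi_1,\ldots,\pi_j\rangle)$. Any residual gap is closed by the subline trick: a point $z$ of a subspace we want inside $B$ lies on a line $\ell$ of the field reduction meeting the part of $B$ already secured in enough points that $\mathcal{B}(\ell)$ contains at least $4$ points of $B$; since by Theorem~\ref{inters} $B$ meets the line $\langle\mathcal{B}(\ell)\rangle$ in a linear set, Result~\ref{gevolg} forces $\mathcal{B}(\ell)\subseteq B$, whence $z$ represents a point of $B$.

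Finally I count to finish. The $(n-k+1)$-spaces through $\pi$ form a $\PG(k-1,q^3)$ with $(q^{3k}-1)/(q^3-1)$ points, of which at most about $2q^{3k-4}$ are missing from the list $H_1,H_2,\ldots$; this is fewer than the number of points outside a hyperplane of $\PG(k-1,q^3)$, so the $H_i$ are not all contained in a hyperplane and $\langle H_i\rangle=\PG(n,q^3)$. Running the bootstrap up to $j=k-1$ gives $\mathcal{B}(\langle\pi_1,\ldots,\pi_{k-1}\rangle)=B\cap S_{k-1}$ with $S_{k-1}$ a hyperplane, and then adjoining one more $\pi_k$ coming from an $H_k\not\subseteq S_{k-1}$ (for which $H_k\cap S_{k-1}=\pi$, hence $\pi_k\cap\langle\pi_1,\ldots,\pi_{k-1}\rangle=\{x\}$) produces a $3k$-dimensional subspace $W=\langle\pi_1,\ldots,\pi_k\rangle$ with $\mathcal{B}(W)\subseteq B$. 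Then $\mathcal{B}(W)$ is a linear $k$-blocking set of $\PG(n,q^3)$ contained in $B$, so minimality of $B$ gives $B=\mathcal{B}(W)$, and $B$ is $\F$-linear.

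The main obstacle is the bootstrap step: passing from the pairwise relations $\mathcal{B}(\langle\pi_i,\pi_j\rangle)\subseteq B$ to a single $3k$-dimensional $W$ with $\mathcal{B}(W)\subseteq B$. Three things must be handled together — (i) controlling the sizes of the intermediate spaces $S_j$ so that Lemma~\ref{kleine} is applicable at every stage; (ii) showing the various linear pieces are forced to be mutually compatible, i.e. that the $3$-spaces $\pi_i$ genuinely lie in a common representative $\sigma_j$, which needs the transversal-to-regulus rigidity rather than just the non-injective map $\mathcal{B}$; and (iii) choosing the $H_i$ (hence the $\pi_i$) in general enough position that $\dim\langle\pi_1,\ldots,\pi_k\rangle$ actually reaches $3k$ instead of collapsing. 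The subline argument via Result~\ref{gevolg} and Theorem~\ref{inters}, exactly as in the last part of the proof of Lemma~\ref{ess}, is the tool that patches whatever the dimension/size bookkeeping leaves uncovered.
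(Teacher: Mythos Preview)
Your overall architecture matches the paper's: Lemma~\ref{situatie} for the setup, Lemma~\ref{ess} for the pairwise gluing, then an inductive bootstrap to a $3k$-dimensional $W$ with $\mathcal{B}(W)\subseteq B$. The gap is precisely in the bootstrap, which you flag yourself and which you try to run differently from the paper. You propose to choose the $H_i$ so that every intermediate $S_j=\langle H_1,\dots,H_j\rangle$ with $j\le k-1$ is small, then invoke Lemma~\ref{kleine} to get $B\cap S_j=\mathcal{B}(\sigma_j)$ and identify $\sigma_j=\langle\pi_1,\dots,\pi_j\rangle$. Two problems. First, the lemmas you cite do not give you simultaneous control over smallness of all the $S_j$: Lemma~\ref{hypervlakken} concerns $(n-k+1)$-spaces through an $(n-k)$-space, Lemma~\ref{p-3} concerns hyperplanes through an $(n-2)$-space, and neither gives the intermediate dimensions. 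Second, and fatally, at $j=k$ you cannot use Lemma~\ref{kleine} since $S_k=\PG(n,q^3)$, yet you simply assert $\mathcal{B}(W)\subseteq B$. The fallback you sketch --- pairwise $\mathcal{B}(\langle\pi_i,\pi_k\rangle)\subseteq B$ plus the subline trick --- does not close this: for $k=3$, say, the three $6$-spaces $\langle\pi_i,\pi_j\rangle$ each have codimension $3$ in the $9$-space $W$, so a line through a given point meets the known part of $B$ in at most three points, one short of what Result~\ref{gevolg} needs.

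The paper's bootstrap avoids both issues and works uniformly through $j=k$. It never asks the $S_j$ to be small. Instead it selects the $H_i$ so that each $\langle H_1,\dots,H_i\rangle$ contains few of the ``bad'' $(n-k+1)$-spaces $K_j$ through $\pi$ (those not among the good $H_m$), with explicit bounds $5q^{3i-4}-49q^{3i-6}$. At the inductive step it applies Lemma~\ref{ess} not merely to $\pi_{i+1}$ against the originally chosen $\pi_j$, but to $\pi_{i+1}$ against \emph{every} $3$-space $\mu$ through $x$ in $\langle\pi_1,\dots,\pi_i\rangle$ whose associated $(n-k+1)$-space is not one of the $K_j$. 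This already places all but at most $(5q^{3i-4}-49q^{3i-6})(q^6+q^5+q^4)$ points of $\langle\pi_1,\dots,\pi_{i+1}\rangle$ into $B$; a direct count then shows every remaining point lies on a line meeting the good part in at least four points, and Result~\ref{gevolg} finishes. The idea you are missing is exactly this: once $\mathcal{B}(\langle\pi_1,\dots,\pi_i\rangle)\subseteq B$, every suitable $3$-subspace of $\langle\pi_1,\dots,\pi_i\rangle$ becomes available as a partner for $\pi_{i+1}$ in Lemma~\ref{ess}, not just the $k-1$ original ones.
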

\begin{proof} If $B$ is a $k$-space, then $B$ is $\F$-linear. If $B$ is non-trivial small minimal $k$-blocking set, Lemma \ref{situatie} shows that there exists a point $P$ of $B$, a tangent $(n-k)$-space $\pi$ at the point $P$ and at least $q^{3k-3}-2q^{3k-4}$ $(n-k+1)$-spaces $H_i$  through $\pi$ for which $B\cap H_i$ is small and linear, where $P$ lies on at least one $(q+1)$-secant of $B\cap H_i$, $i=1,\ldots,s$, $s\geq q^{3k-3}-2q^{3k-4}$. Let $B\cap H_i=\mathcal{B}(\pi_i), i=1,\ldots, s$, with $\pi_i$ a $3$-dimensional space.

Lemma \ref{ess} shows that $\mathcal{B}(\langle \pi_i,\pi_j \rangle)\subseteq B$, $0\leq i\neq j\leq s$. 

If $k=2$, the set $\mathcal{B}(\langle \pi_1,\pi_2 \rangle)$ corresponds to a linear $2$-blocking set $B'$ in $\PG(n,q^3)$. Since $B$ is minimal, $B=B'$, and the Theorem is proven.

Let $k>2$. Denote the $(n-k+1)$-spaces through $\pi$, different from $H_i$, by $K_j, j=1,\ldots, z$. It follows from Lemma \ref{situatie} that $z\leq 2q^{3k-4}+(q^{3k-3}-1)/(q^3-1)$.
There are at least $(q^{3k-3}-2q^{3k-4}-1)/q^3$ different $(n-k+2)$-spaces $\langle H_1, H_j\rangle$, $1<j\leq s$. If all $(n-k+2)$-spaces $\langle H_1,H_j\rangle$, contain at least $5q^2-49$ of the spaces $K_i$, then $z\geq
(5q^2-49)(q^{3k-3}-2q^{3k-4}-1)/q^3$, a contradiction if $q\geq 7$. Let $\langle H_1,H_2\rangle$ be an $(n-k+2)$-spaces containing less than $5q^2-49$ spaces $K_i$. 
 
Suppose by induction that for any $1<i<k$, there is an $(n-k+i)$-space $\langle H_1,H_2,\ldots, H_i\rangle$  containing at most $5q^{3i-4}-49q^{3i-6}$ of the spaces $K_i$ such that $\mathcal{B}(\langle \pi_1,\ldots,\pi_i\rangle)\subseteq B$. 

There are at least $\frac{q^{3k-3}-2q^{3k-4}-(q^{3i}-1)/(q^3-1)}{q^{3i}}$ different $(n-k+i+1)$-spaces $\langle H_1,H_2,\ldots, H_i,H\rangle$, $H\not\subseteq \langle H_1,H_2,\ldots, H_i\rangle$. If all of these contain at least $5q^{3i-1}-49q^{3i-3}$ of the spaces $K_i$, then 
\begin{displaymath}
\begin{array}{ll}
z\geq & (5q^{3i-1}-49q^{3i-3}-5q^{3i-4}+49q^{3i-6}) \frac{q^{3k-3}-2q^{3k-4}-(q^{3i}-1)/(q^3-1)}{q^{3i}}\\ 
& +5q^{3i-4}-49q^{3i-6},\\
\end{array}
\end{displaymath}
 a contradiction if $q\geq 7$. Let $\langle H_1,\ldots, H_{i+1}\rangle$ be an $(n-k+i+1)$-space containing less than $5q^{3i-1}-49q^{3i-3}$ spaces $K_i$. We still need to prove that $\mathcal{B}(\langle \pi_1,\ldots,\pi_{i+1}\rangle)\subseteq B$. Since $\mathcal{B}(\langle \pi_{i+1}, \pi \rangle)\subseteq B$, with $\pi$ a 3-space in $ \langle \pi_1,\ldots,\pi_i\rangle$ for which $\mathcal{B}(\pi)$ is not contained in one of the spaces $K_i$, there are at most $5q^{3i-4}-49q^{3i-6}$ 6-dimensional spaces $\langle \pi_{i+1},\mu\rangle$ for which $\mathcal{B}(\langle\pi_{i+1},\mu\rangle)$ is not necessarily contained in $B$, giving rise to at most $(5q^{3i-4}-49q^{3i-6})(q^6+q^5+q^4)$ points $t$ for which $\mathcal{B}(t)$ is not necessarily contained in $B$.  Let $u$ be a point of such a space $\langle \pi_{i+1},\mu \rangle$. Suppose that each of the $(q^{3i+3}-1)/(q-1)$ lines through $u$ in $\langle \pi_1,\ldots, \pi_{i+1}\rangle$ contains at least $q-2$ of the points $t$ for which $\mathcal{B}(t)$ is not in $B$. Then there are at least $(q-3)(q^{3i+3}-1)/(q-1)+1>(5q^{3i-4}-49q^{3i-6})(q^6+q^5+q^4)$ such points $t$, if $q\geq 7$, a contradiction. Hence, there is a line $N$ through $t$ for which for at least 4 points $v\in N$, $\mathcal{B}(v)\in B$. Result \ref{gevolg} yields that $\mathcal{B}(t)\in B$. This implies that $\mathcal{B}(\langle \pi_1,\ldots,\pi_{i+1}\rangle)\subseteq B$.
 
Hence, the space $\langle H_1,H_2,\ldots, H_k\rangle$, which spans the space $\PG(n,q^3)$, is such that $\mathcal{B}(\langle \pi_1,\ldots,\pi_k\rangle)\subseteq B$.  But $\mathcal{B}(\langle \pi_1,\ldots, \pi_k \rangle)$ corresponds to a linear $k$-blocking set $B'$ in $\PG(n,q^3)$. Since $B$ is minimal, $B=B'$.
\end{proof}

\begin{corollary}A small minimal $k$-blocking set in $\PG(n,p^3)$, $p$ prime, $p\geq 7$, is $\mathbb{F}_p$-linear.
\end{corollary}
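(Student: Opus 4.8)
The plan is to deduce this corollary immediately from the main theorem (Theorem~1) together with Result~\ref{Sz}, exactly in the spirit of the corollary following Theorem~\ref{basis}. First I would observe that, since $p$ is prime, $\PG(n,p^3)$ is precisely $\PG(n,q^3)$ with $q=p$ (that is, $h=1$), so Theorem~1 is applicable verbatim as soon as its hypothesis is verified, namely that $B$ meets every $(n-k)$-space in $1\pmod{q}=1\pmod{p}$ points.

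This is where Result~\ref{Sz} enters. Since $p\geq 7>2$ and $B$ is a small minimal $k$-blocking set of $\PG(n,p^3)$, every subspace of $\PG(n,p^3)$ meets $B$ in either $0$ or $1\pmod{p}$ points. An $(n-k)$-space cannot be disjoint from $B$, because $B$ is a $k$-blocking set; hence the ``$0$ points'' alternative never occurs for $(n-k)$-spaces, and every $(n-k)$-space meets $B$ in $1\pmod{p}$ points. Thus the hypotheses of Theorem~1 hold with $q=p$, and Theorem~1 gives that $B$ is linear. In this situation the field reduction underlying the construction recalled in the introduction is taken with $q_0=p$ and $t=3$, so ``linear'' here means $\mathbb{F}_p$-linear (cf.\ Theorem~\ref{H1}), which is exactly the assertion.

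I do not expect any genuine obstacle here: all the work is contained in Theorem~1, and the corollary is a one-line specialisation to $h=1$. The only point requiring a small amount of care is the implication chain ``$B$ is a $k$-blocking set $\Rightarrow$ no $(n-k)$-space is disjoint from $B$ $\Rightarrow$ the $1\pmod{p}$ intersection hypothesis of Theorem~1 genuinely holds'', which rules out the trivial-intersection branch of Result~\ref{Sz}.
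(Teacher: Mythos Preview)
Your proposal is correct and essentially identical to the paper's own argument: the paper's proof reads ``This follows from Result~\ref{Sz} and Theorem~\ref{H1}'', i.e., exactly the reduction you describe (verify the $1\pmod p$ intersection condition via Result~\ref{Sz}, then invoke the main linearity result with $q=p$). The only cosmetic difference is that the paper cites Theorem~\ref{H1} rather than Theorem~1, which is harmless since for $q=p$ prime the quantity $q\sqrt{q}$ is not an integer and Case~2 cannot occur.
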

\begin{proof} This follows from Results \ref{Sz} and Theorem \ref{H1}.
\end{proof}


\subsection{Case 2: there are $(q\sqrt{q}+1)$-secants to $B$}
In this subsection, we will use induction on $k$ to prove that small minimal $k$-blocking sets in $\PG(n,q^3)$, intersecting every $(n-k)$-space in $1\pmod{q}$ points and containing a $q\sqrt{q}+1$-secant, are $\mathbb{F}_{q\sqrt{q}}$-linear. The induction basis is Theorem \ref{basis}. We continue with assumptions $(H_2)$ and
\begin{itemize}
\item[($B_2$)]{$B$ is small minimal $k$-blocking set in $\PG(n,q^3)$ intersecting every $(n-k)$-space in $1 \pmod{q}$ points, containing a $(q\sqrt{q}+1)$-secant.}
\end{itemize}

In this case, $\mathcal{S}$ maps $\PG(n,q^3)$ onto $\PG(2n+1,q\sqrt{q})$ and the Desarguesian spread consists of lines.

\begin{lemma}\label{situatie'} If $B$ is non-trivial, there exist a point $P\in B$, a tangent $(n-k)$-space $\pi$ at $P$ and  small $(n-k+1)$-spaces $H_i$ through $\pi$, such that there is a $(q\sqrt{q}+1)$-secant through $P$ in $H_i$, $i=1,\ldots,q^{3k-3}-q^{3k-4}-2\sqrt{q}q^{3k-5}$.
\end{lemma}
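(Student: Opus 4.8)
The plan is to follow the proof of Lemma~\ref{situatie} almost verbatim, systematically replacing every $(q+1)$-secant by a $(q\sqrt q+1)$-secant, invoking part~(2) of Lemma~\ref{hypervlakken} where that proof used part~(3), and using one structural fact: a non-trivial small minimal linear $1$-blocking set of a plane $\cong\PG(2,q^3)$ that carries a $(q\sqrt q+1)$-secant is a Baer subplane $\PG(2,q\sqrt q)$, of which that secant is a line. Indeed, by Lemma~\ref{kleine} and Result~\ref{St} such a set is linear; the $\mathbb{F}_q$-linear $1$-blocking sets of $\PG(2,q^3)$ have only $1$-, $(q+1)$-, $(q^2+1)$- and $(q^2+q+1)$-secants, so a $(q\sqrt q+1)$-secant forces an $\mathbb{F}_{q\sqrt q}$-linear set, and the only non-trivial $\mathbb{F}_{q\sqrt q}$-linear $1$-blocking set of $\PG(2,q^3)$ is the Baer subplane. (Note that $(B_2)$ forces $q$ to be a square.)

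First, using $(B_2)$ I would fix a $(q\sqrt q+1)$-secant $M$ to $B$. Since $1<\vert B\cap M\vert<q^3+1$, Lemma~\ref{extra}(1) gives an $(n-k)$-space $\pi_M$ through $M$ with $B\cap\pi_M=B\cap M$; as $B\cap\pi_M$ consists of $q\sqrt q+1$ collinear points, Lemma~\ref{hypervlakken}(2) yields at least $(q^{3k}-1)/(q^3-1)-q^{3k-5}-5q^{3k-6}+1$ small $(n-k+1)$-spaces through $\pi_M$, and in each such space $H'$ the set $B\cap H'$ is, by the structural fact, a Baer subplane $\PG(2,q\sqrt q)$ with $M$ as a line. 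Now fix a point $P$ of $M$: in each such Baer subplane $P$ lies on exactly $q\sqrt q+1$ subplane lines, hence on $q\sqrt q$ $(q\sqrt q+1)$-secants besides $M$; such a secant cannot lie in $\pi_M$, so it lies in a unique $(n-k+1)$-space through $\pi_M$, and therefore $P$ lies on at least $q\sqrt q\bigl((q^{3k}-1)/(q^3-1)-q^{3k-5}-5q^{3k-6}+1\bigr)$ distinct $(q\sqrt q+1)$-secants.

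Next, as in Lemma~\ref{situatie}, I would bound the number of full lines through $P$: each small $(n-k+1)$-space through $\pi_M$ carries a Baer subplane with $q^3$ points off $M$, distinct such spaces meet $B$ only in $B\cap M$, and $\vert B\vert<q^{3k}+q^{3k-1}+q^{3k-2}+3q^{3k-3}$ by Lemma~\ref{grootte}, so the number of points of $B$ in the large $(n-k+1)$-spaces through $\pi_M$ --- and hence the number of full lines through $P$ --- is bounded. By minimality of $B$ there is a tangent $(n-k)$-space $\pi$ at $P$; by Lemma~\ref{hypervlakken}(1) at most $q^{3k-5}+4q^{3k-6}-1$ of the $(n-k+1)$-spaces through $\pi$ are large, and after discarding also the boundedly many small ones carrying a full line through $P$, every remaining small $(n-k+1)$-space through $\pi$ meets $B$ in a non-trivial linear $1$-blocking set through $P$, of at least $q^3+q\sqrt q+1$ points. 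A further count against $\vert B\vert$ bounds the number of points of $B$ in the large $(n-k+1)$-spaces through $\pi$, and hence the number of $(q\sqrt q+1)$-secants through $P$ that they contain.

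Finally, every $(q\sqrt q+1)$-secant through $P$ lies in a unique $(n-k+1)$-space through $\pi$, and since such a space carries a $(q\sqrt q+1)$-secant only if $B$ meets it in a Baer subplane, it then carries at most $q\sqrt q+1$ of them through $P$. Subtracting the secants in the large $(n-k+1)$-spaces through $\pi$ from the lower bound of the second paragraph and dividing by $q\sqrt q+1$ gives, for $q\ge 7$, at least $q^{3k-3}-q^{3k-4}-2\sqrt q\,q^{3k-5}$ small $(n-k+1)$-spaces $H_i$ through $\pi$ each containing a $(q\sqrt q+1)$-secant through $P$. The only genuine difficulty is the arithmetic: a Baer subplane $\PG(2,q\sqrt q)$ has only $q^3+q\sqrt q+1$ points, roughly a factor $q$ fewer points outside $M$ than the $q^3+q^2+1$ available in the $(q+1)$-secant situation, so the count of leftover points of $B$ in the large spaces is coarser by about a factor $q$; one must keep these estimates tight (using also that any $\mathbb{F}_q$-linear pieces that occur carry $q^3+q^2$ rather than $q^3+q\sqrt q$ points) for the inequalities to still close when $q\ge 7$, which is exactly why the conclusion is weaker than the $q^{3k-3}-2q^{3k-4}$ of Lemma~\ref{situatie}.
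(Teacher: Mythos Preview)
Your proposal is correct and follows essentially the same route as the paper's own proof: start from a $(q\sqrt q+1)$-secant $M$, build $\pi_M$ via Lemma~\ref{extra}(1), use Lemma~\ref{hypervlakken}(2) to get many small $(n-k+1)$-spaces through $\pi_M$ whose intersection with $B$ is a Baer subplane, count $(q\sqrt q+1)$-secants through a point $P\in M$, bound full lines through $P$, pass to a tangent $(n-k)$-space $\pi$, and finish by the same double count (the paper cites Lemma~\ref{hypervlakken}(3) at the second step, but the bound actually used there is that of part~(2), in agreement with your choice). Your closing remark about $\mathbb{F}_q$-linear pieces is a harmless refinement; the paper simply uses the uniform lower bound $q^3+q\sqrt q+1$ for non-trivial small intersections.
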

\begin{proof}

There is a $(q\sqrt{q}+1)$-secant $M$. Lemma \ref{extra}(1) shows that there is an $(n-k)$-space $\pi_M$ through $M$ such that $B\cap M=B\cap \pi_M$.

Lemma \ref{hypervlakken}(3) shows that there are at least $\frac{q^{3k}-1}{q^3-1}-q^{3k-5}-5q^{3k-6}+1$ small $(n-k+1)$-spaces through $\pi_M$. Moreover, the intersections of these small $(n-k+1)$-spaces with $B$ are Baer subplanes $\PG(2,q\sqrt{q})$, since there is a $(q\sqrt{q}+1)$-secant $M$. Let $P$ be a point of $M\cap B$.

Since in any of these intersections, $P$ lies on $q\sqrt{q}$ other $(q\sqrt{q}+1)$-secants, a point $P$ of $M\cap B$ lies in total on at least $q\sqrt{q}(\frac{q^{3k}-1}{q^3-1}-q^{3k-5}-5q^{3k-6}+1)$ other $(q\sqrt{q}+1)$-secants. Since any of the $\frac{q^{3k}-1}{q^3-1}-q^{3k-5}-5q^{3k-6}+1$ small $(n-k+1)$-spaces through $\pi_M$ contains $q^3$ points of $B$ not in $\pi_M$, and $\vert B \vert <q^{3k}+q^{3k-1}+q^{3k-2}+3q^{3k-3}$ (see Lemma \ref{grootte}), there are less than $q^{3k-1}+4q^{3k-2}$ points of $B$ left in the other $(n-k+1)$-spaces through $\pi_M$. Hence, $P$ lies on less than $q^{3k-4}+4q^{3k-5}$ full lines.

Since $B$ is minimal, there is a tangent $(n-k)$-space $\pi$ through $P$. There are at most $q^{3k-5}+4q^{3k-6}-1$ large $(n-k+1)$-spaces through $\pi$ (Lemma \ref{hypervlakken}(1)). Moreover, since at least $\frac{q^{3k}-1}{q^3-1}-(q^{3k-5}+4q^{3k-6}-1)-(q^{3k-4}+4q^{3k-5})$ small $(n-k+1)$-spaces through $\pi$ contain $q^3+q\sqrt{q}+1$ points of $B$, and at most $q^{3k-4}+4q^{3k-5}$ of the small $(n-k+1)$-spaces through $\pi$ contain exactly $q^3+1$ points of $B$, there are at most $q^{3k-1}-q^{3k-2}\sqrt{q}+4q^{3k-2}$ points of $B$ left. Hence, $P$ lies on at most $(q^{3k-1}-q^{3k-2}\sqrt{q}+4q^{3k-2})/(q\sqrt{q}+1)$ different $(q\sqrt{q}+1)$-secants of the large $(n-k+1)$-spaces through $\pi$. This implies that there are at least $q\sqrt{q}(\frac{q^{3k}-1}{q^3-1}-q^{3k-5}-5q^{3k-6}+1)-(q^{3k-1}-q^{3k-2}\sqrt{q}+4q^{3k-2})/(q\sqrt{q}+1)$ different $(q\sqrt{q}+1)$-secants left through $P$ in small $(n-k+1)$-spaces through $\pi$. Since in a small $(n-k+1)$-space through $\pi$, there lie $q\sqrt{q}+1$ different $(q\sqrt{q}+1)$-secants through $P$, this implies that there are certainly at least $q^{3k-3}-q^{3k-4}-2\sqrt{q}q^{3k-5}$ small $(n-k+1)$-spaces $H_i$ through $\pi$ such that $P$ lies on a $(q\sqrt{q}+1)$-secant in $H_i$.
\end{proof}

\begin{lemma}\label{ess'} Let $\pi$ be an $(n-k)$-dimensional tangent space of $B$ at the point $P$. Let $H_1$ and $H_2$ be two $(n-k+1)$-spaces through $\pi$ for which $B\cap H_i=\mathcal{B}(\pi_i)$, for some plane $\pi_i$ through $x\in \mathcal{S}(P)$, $\mathcal{B}(x)\cap \pi_i=\lbrace x \rbrace$ $(i=1,2)$ and $\mathcal{B}(\pi_i)$ not contained in a line of $\PG(n,q^3)$.
Then $\mathcal{B}(\langle \pi_1,\pi_2\rangle)\subseteq B$.

\end{lemma}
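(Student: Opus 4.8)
The plan is to transcribe the proof of Lemma~\ref{ess} into the Case~2 setting, making the substitutions forced by the identity $q^{3}=(q\sqrt q)^{2}$: here field reduction uses $t=2$, so $\mathcal D$ consists of lines of $\PG(2n+1,q\sqrt q)$, each $\mathcal S(P)$ is a line, a small minimal linear $1$-blocking set of an $(n-k+1)$-space corresponds to $\mathcal B(\pi_i)$ with $\pi_i$ a \emph{plane} rather than a $3$-space, and ``linear'' means $\mathbb F_{q\sqrt q}$-linear, which is exactly what $(H_2)$ supplies. Consequently $\mu_i$ is at most a point or a line, $\langle\mathcal B(M)\rangle$ is a $3$-space carrying the regulus of the $q\sqrt q+1$ spread lines that meet a line $M$ through $x$ in $\pi_1\setminus\mu_1$, and $M$ is the unique transversal of that regulus through $x$. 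Throughout, $q$ is a square with $p\ge 7$, so $q\ge 49$, which makes the estimates below comfortable.

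The first step is the analogue of the claim in Lemma~\ref{ess}, but with a quantitatively stronger conclusion: for suitable $M$ through $x$ in $\pi_1\setminus\mu_1$ and a point $s\ne x$ of $\pi_2\setminus\mu_2$ there exist a line $T$ through $s$ in $\pi_2\setminus\mu_2$ and an $(n-2)$-space $\pi_M$ through $\langle\mathcal B(M)\rangle$ such that for \emph{more than $q+\sqrt q+1$} of the points $t_i\in T\setminus\mu_2$ the hyperplane $\langle\pi_M,\mathcal B(t_i)\rangle$ is small, and hence $B\cap\langle\pi_M,\mathcal B(t_i)\rangle$, a small minimal $1$-blocking set containing the $(q\sqrt q+1)$-secant $\langle\mathcal B(M)\rangle$, is $\mathbb F_{q\sqrt q}$-linear by $(H_2)$. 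Here $\pi_M$ comes from Lemma~\ref{extra}(1) when $k=2$ and from Lemma~\ref{extra}(2) when $k>2$ (in the latter case with $B\cap\pi_M$ a small minimal $(k-2)$-blocking set skew to $\langle\mathcal B(T)\rangle$, hence $\mathbb F_{q\sqrt q}$-linear by $(H_2)$, so that Lemma~\ref{p-3} applies); in both cases one takes $\pi_M$ meeting the plane $\langle\mathcal B(\pi_2)\rangle$ only in $P$, so that no large hyperplane through $\pi_M$ contains that plane. Since there are at most $q+3$ large hyperplanes through $\pi_M$ when $k=2$ (Lemma~\ref{hypervlakken}(1)) and at most $q-5$ when $k>2$ (Lemma~\ref{p-3}), choosing $T$ so that $\langle\mathcal B(T)\rangle$ lies in none of them makes each such hyperplane contain at most one point of the Baer subline $\mathcal B(T)$, and since $|\mathcal B(T)|=q\sqrt q+1\gg q$ this leaves more than $q+\sqrt q+1$ good points $t_i$. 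Unlike in Lemma~\ref{ess}, no separate analysis of small $q$ is needed, precisely because $\mathcal B(T)$ is now a long Baer subline rather than a $(q+1)$-secant.

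The remainder follows Lemma~\ref{ess} verbatim. For each good $t_i$, linearity of $B\cap\langle\mathcal B(t_i),\pi_M\rangle$ produces a plane $\tau_i$ with $\tau_i\cap\mathcal S(P)=\{x\}$ and $\mathcal B(\tau_i)=\langle\mathcal B(t_i),\mathcal B(M)\rangle\cap B$; since $\tau_i\cap\langle\mathcal B(M)\rangle$ and $M$ are transversals through $x$ of the regulus $\mathcal B(M)$ in the $3$-space $\langle\mathcal B(M)\rangle$, and such a transversal is unique, they coincide, so $M\subseteq\tau_i$, likewise $t_i\in\tau_i$, whence $\mathcal B(\langle M,t_i\rangle)=\mathcal B(\tau_i)\subseteq B$. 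A line $L'$ of the $3$-space $\langle M,T\rangle$ missing $M$ meets each of the more than $q+\sqrt q+1$ planes $\langle M,t_i\rangle$ in a point $p_i$ with $\mathcal B(p_i)\in B$, so the Baer subline $\mathcal B(L')$ meets $B$ in more than $q+\sqrt q+1$ points; the intersection $B\cap\ell$ on the line $\ell=\langle\mathcal B(L')\rangle$ of $\PG(n,q^3)$, a linear set by Theorem~\ref{inters}, can then by Result~\ref{ba} (and the list of $\F$-linear sets of $\PG(1,q^3)$) be neither a subline $\PG(1,q)$ nor a $(q^2+1)$- nor a $(q^2+q+1)$-linear set, so it is a Baer subline or all of $\ell$; in either case $\mathcal B(L')\subseteq B$, since two Baer sublines sharing at least three points coincide. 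Hence $\mathcal B(\langle M,T\rangle)\subseteq B$; letting $M$ and $s$ vary gives $\mathcal B(\langle\pi_1,\pi_2\rangle\setminus(\langle\mu_1,\pi_2\rangle\cup\langle\mu_2,\pi_1\rangle))\subseteq B$, and the two exceptional hyperplanes are cleaned up as in Lemma~\ref{ess}: every one of their points lies on a line $N$ meeting the complement in $q\sqrt q-1>q+\sqrt q+1$ points, so $\mathcal B(N)\subseteq B$ again by Result~\ref{ba}. Therefore $\mathcal B(\langle\pi_1,\pi_2\rangle)\subseteq B$.

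The genuinely new obstacle is the final containment step. In Case~1 it rested on Result~\ref{gevolg}, which forces a subline $\PG(1,q)$ meeting an $\F$-linear set in four points to be contained in it; its Case~2 replacement, Result~\ref{ba}, only bounds the intersection of a Baer subline with a $(q^2+1)$- or $(q^2+q+1)$-linear set by $q+\sqrt q+1$ and gives no containment from a bounded number of common points. This is what forces the whole argument to be driven by the target ``more than $q+\sqrt q+1$ common points'' instead of ``at least $4$'', and hence forces both the sharper counting in the claim and the careful choice of $\pi_M$ (so that no large hyperplane through it swallows the plane of $\mathcal B(\pi_2)$ and thereby kills all candidate points $t_i$). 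The only other thing to track is the $\mathbb F_{q\sqrt q}$-bookkeeping: one must check that $\langle\mathcal B(M)\rangle$ is a genuine $(q\sqrt q+1)$-secant of $B$ — which holds because $\mathcal B(M)\subseteq B\cap H_1=\mathcal B(\pi_1)$ is a full Baer subline on a line of $\PG(n,q^3)$ already lying inside $H_1$ — so that the small hyperplanes $\langle\pi_M,\mathcal B(t_i)\rangle$ are covered by $(H_2)$ and not by $(H_1)$.
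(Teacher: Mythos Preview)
Your proof is correct and follows the same overall strategy as the paper: find enough points $t_i$ on a line $T$ for which $\langle\pi_M,\mathcal B(t_i)\rangle$ is a small hyperplane, deduce $\mathcal B(\langle M,t_i\rangle)\subseteq B$ via the transversal argument, and then propagate using Result~\ref{ba}.

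The paper's version is, however, noticeably simpler, and the simplification is worth noting. In Case~2 the plane $\pi_i$ is automatically \emph{scattered} with respect to $\mathcal D$: since $\mathcal B(\pi_i)$ is a Baer subplane it has $q^3+q\sqrt q+1=|\pi_i|$ points, so every point of $\pi_i$ lies in a different spread line. Consequently the exceptional set $\mu_i$ is empty and the whole $\mu_i$-bookkeeping you imported from Lemma~\ref{ess} can be dropped; this is why the paper takes $M$ to be an arbitrary line through $x$ in $\pi_1$ and $s$ an arbitrary point of $\pi_2\setminus\{x\}$, and why no clean-up of $\langle\mu_1,\pi_2\rangle\cup\langle\mu_2,\pi_1\rangle$ is needed at the end. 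The paper also does not need your ``careful choice of $\pi_M$ meeting $\langle\mathcal B(\pi_2)\rangle$ only in $P$'' as a separate condition: once $\pi_M$ is taken skew to the line $\langle\mathcal B(T)\rangle$ (which is what Lemma~\ref{extra}(2) already provides for $k>2$, and which for $k=2$ follows since $\langle\mathcal B(M)\rangle$ and $\langle\mathcal B(T)\rangle$ are skew), each hyperplane through $\pi_M$ meets $\langle\mathcal B(T)\rangle$ in a single point, and one immediately gets at least $q\sqrt q+1-(q+3)=q\sqrt q-q-2$ good $t_i$. Your weaker target ``more than $q+\sqrt q+1$'' is of course implied by this, and both suffice for Result~\ref{ba}. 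In short: same argument, but you are carrying machinery from Case~1 that the change of parameters has rendered vacuous.
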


\begin{proof}
Let $M$ be a line through $x$ in $\pi_1$, let $s\neq x$ be a point of $\pi_2$.

We claim that there is a line $T$ through $s$, not through $x$, in $\pi_2$ and an $(n-2)$-space $\pi_M$ through $\langle\mathcal{B}(M)\rangle$ such that there are at least $q\sqrt{q}-q-2$ points $t_i\in T$, such that $\langle \pi_M, \mathcal{B}(t_i)\rangle$ is small and hence has a linear intersection with $B$, with $B\cap \pi_M=M$ if $k=2$ and $B\cap \pi_M$ is a small minimal $(k-2)$-blocking set if $k>2$.
From Lemma \ref{hypervlakken}(1), we know that there are at most $q+3$ large hyperplanes through $\pi_M$ if $k=2$, and at most $q-5$ if $k>2$ (see Lemma \ref{p-3}). 

Let $T$ be a line through $s$ in $\pi_2$, not through $x$. The existence of $\pi_M$ follows from Lemma \ref{extra}(1) if $k=2$, and Lemma \ref{extra}(2) if $k>2$. Since $\mathcal{B}(T)$ contains $q\sqrt{q}+1$ spread elements, there are at least $q\sqrt{q}-q-2$ points $t_i\in T$ such that $\langle \pi_M, \mathcal{B}(t_i)\rangle$ is small.
This proves our claim.

Since $B\cap \langle \mathcal{B}(t_i),\pi_M\rangle$ is linear, also the intersection of $\langle \mathcal{B}(t_i),\mathcal{B}(M)\rangle$ with $B$ is linear, i.e., there exist subspaces $\tau_i$, $\tau_i\cap \mathcal{S}(P)=\lbrace x \rbrace$, such that $\mathcal{B}(\tau_i)=\langle \mathcal{B}(t_i),\mathcal{B}(M)\rangle \cap B$. Since $\tau_i \cap \langle \mathcal{B}(M)\rangle$ and $M$ are both transversals through $x$ to the same regulus $\mathcal{B}(M)$, they coincide, hence $M\subseteq \tau_i$. The same holds for $\tau_i\cap \langle \mathcal{B}(t_i),\mathcal{S}(P)\rangle$, implying $t_i\in \tau_i$. We conclude that $\mathcal{B}(\langle M,t_i\rangle)\subseteq \mathcal{B}(\tau_i)\subseteq B$.

We show that $\mathcal{B}(\langle M,T\rangle)\subseteq B$. Let $L'$ be a line of $\langle M,T\rangle$, not intersecting $M$. The line $L'$ intersects the planes $\langle M,t_i\rangle$ in points $p_i$ such that $\mathcal{B}(p_i)\subseteq B$. Since $\mathcal{B}(L')$ is a subline intersecting $B$ in at least $q\sqrt{q}-q-2$ points, Result \ref{ba} shows that $\mathcal{B}(L')\subseteq B$. Since every point of the space $\langle M,T\rangle$ lies on such a line $L'$, $\mathcal{B}(\langle M,T\rangle)\subseteq B$. 

Hence, $\mathcal{B}(\langle M,s\rangle)\subseteq B$ for all lines $M$ through $x$ in $\pi_2$, and all points $s\neq x\in\pi_2$. We conclude that $\mathcal{B}(\langle \pi_1,\pi_2\rangle)\subseteq B$. 
\end{proof}

\begin{theorem} \label{H2} The set $B$ is $\mathbb{F}_{q\sqrt{q}}$-linear.
\end{theorem}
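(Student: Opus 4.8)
The plan is to run the same argument as in the proof of Theorem~\ref{H1}, with the ``primed'' ingredients adapted to the $(q\sqrt{q}+1)$-secant setting and Result~\ref{ba} playing the role that Result~\ref{gevolg} played there. If $B$ is a $k$-space it is trivially $\mathbb{F}_{q\sqrt{q}}$-linear, so assume $B$ is non-trivial. First I would apply Lemma~\ref{situatie'} to obtain a point $P\in B$, a tangent $(n-k)$-space $\pi$ at $P$, and at least $q^{3k-3}-q^{3k-4}-2\sqrt{q}q^{3k-5}$ small $(n-k+1)$-spaces $H_i$ through $\pi$, each carrying a $(q\sqrt{q}+1)$-secant through $P$. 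Because $B$ contains a $(q\sqrt{q}+1)$-secant, Result~\ref{St} together with Result~\ref{Sz} forces each small $B\cap H_i$ to be a Baer subplane $\PG(2,q\sqrt{q})$, so in the field reduction $\mathcal{S}\colon\PG(n,q^3)\to\PG(2n+1,q\sqrt{q})$ (with line-spread $\mathcal{D}$) we may write $B\cap H_i=\mathcal{B}(\pi_i)$ for a plane $\pi_i$ through $x\in\mathcal{S}(P)$ with $\mathcal{B}(x)\cap\pi_i=\{x\}$ and $\mathcal{B}(\pi_i)$ not a line; note $|\pi_i|=q^3+q\sqrt{q}+1=|\mathcal{B}(\pi_i)|$, so each $\pi_i$ is scattered, which is exactly the hypothesis of Lemma~\ref{ess'}.

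Next, Lemma~\ref{ess'} gives $\mathcal{B}(\langle\pi_i,\pi_j\rangle)\subseteq B$ for all $i\neq j$. When $k=2$, $\langle\pi_1,\pi_2\rangle$ is $4$-dimensional, so $B':=\mathcal{B}(\langle\pi_1,\pi_2\rangle)$ is an $\mathbb{F}_{q\sqrt{q}}$-linear $2$-blocking set contained in $B$, and minimality of $B$ forces $B=B'$. For $k>2$ I would build an increasing chain $\langle H_1,\ldots,H_i\rangle$, $i=2,\ldots,k$, exactly as in Theorem~\ref{H1}: Lemma~\ref{situatie'} bounds the number of ``leftover'' $(n-k+1)$-spaces $K_j$ through $\pi$, and a counting argument using Lemma~\ref{grootte} shows that at each stage one can choose $H_{i+1}\not\subseteq\langle H_1,\ldots,H_i\rangle$ with only few of the $K_j$ inside $\langle H_1,\ldots,H_{i+1}\rangle$. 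Since $H_{i+1}\cap\langle H_1,\ldots,H_i\rangle=\pi$, we get $\pi_{i+1}\cap\langle\pi_1,\ldots,\pi_i\rangle=\{x\}$, so $\dim\langle\pi_1,\ldots,\pi_i\rangle=2i$ throughout. To upgrade $\mathcal{B}(\langle\pi_1,\ldots,\pi_i\rangle)\subseteq B$ to $\mathcal{B}(\langle\pi_1,\ldots,\pi_{i+1}\rangle)\subseteq B$, one observes that only $O(q^{3i-4}\cdot q^{6})$ points $t$ of $\langle\pi_1,\ldots,\pi_{i+1}\rangle$ fail a priori to satisfy $\mathcal{B}(t)\subseteq B$ (those in the few spaces $\langle\pi_{i+1},\mu\rangle$ with $\mathcal{B}(\mu)$ inside some $K_j$), whence every such $t$ lies on a line meeting $\{v:\mathcal{B}(v)\subseteq B\}$ in more than $q+\sqrt{q}+1$ points, and Result~\ref{ba} forces $\mathcal{B}(t)\subseteq B$. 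Iterating to $i=k$ yields $\mathcal{B}(\langle\pi_1,\ldots,\pi_k\rangle)\subseteq B$ with $\langle\pi_1,\ldots,\pi_k\rangle$ of dimension $2k$, so $B':=\mathcal{B}(\langle\pi_1,\ldots,\pi_k\rangle)$ is an $\mathbb{F}_{q\sqrt{q}}$-linear $k$-blocking set, and minimality of $B$ gives $B=B'$.

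The main obstacle is the bookkeeping in this dimension-by-dimension induction: one must verify that the numerical estimates of Theorem~\ref{H1} survive the replacement of the bound $q^{3k-3}-2q^{3k-4}$ by the slightly weaker $q^{3k-3}-q^{3k-4}-2\sqrt{q}q^{3k-5}$ of Lemma~\ref{situatie'}, and that Result~\ref{ba}, which only guarantees a Baer subline meets an $\mathbb{F}_q$-linear set of $\PG(1,q^3)$ in at most $q+\sqrt{q}+1$ points, still suffices, i.e.\ that on each relevant line the number of points $v$ with $\mathcal{B}(v)\subseteq B$ already known exceeds $q+\sqrt{q}+1$. For $q\geq 7$ all these inequalities hold with room to spare, so the argument carries over essentially verbatim from Theorem~\ref{H1}, only with the constants adjusted.
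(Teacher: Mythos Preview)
Your proposal is correct and follows essentially the same approach as the paper's own proof: Lemma~\ref{situatie'} to produce $P$, $\pi$ and the $H_i$; identification of $B\cap H_i$ with a scattered plane $\pi_i$ over $\mathbb{F}_{q\sqrt q}$; Lemma~\ref{ess'} for the pairwise joins; the $k=2$ base case by minimality; and for $k>2$ the same dimension-by-dimension induction, using Result~\ref{ba} to absorb the leftover points. One small caveat: your final remark that ``for $q\geq 7$ all these inequalities hold with room to spare'' is not quite right --- in the paper the counting inequalities in the $k>2$ induction are only verified for $q\geq 49$, but this is automatic here since in Case~2 $q$ is a perfect square and $p\geq 7$.
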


\begin{proof} 
Lemma \ref{situatie'} shows that there exists a point $P$ of $B$, a tangent $(n-k)$-space $\pi$ at the point $P$ and at least $q^{3k-3}-q^{3k-4}-2\sqrt{q}q^{3k-5}$ $(n-k+1)$-spaces $H_i$  through $\pi$ for which $B\cap H_i$ is a Baer subplane, $i=1,\ldots,s$, $s\geq q^{3k-3}-q^{3k-4}-2\sqrt{q}q^{3k-5}$. Let $B\cap H_i=\mathcal{B}(\pi_i), i=1,\ldots, s$, with $\pi_i$ a plane.

Lemma \ref{ess'} shows that $\mathcal{B}(\langle \pi_i,\pi_j \rangle)\subseteq B$, $0\leq i\neq j\leq s$. 

If $k=2$, the set $\mathcal{B}(\langle \pi_1,\pi_2 \rangle)$ corresponds to a linear $2$-blocking set $B'$ in $\PG(n,q^3)$. Since $B$ is minimal, $B=B'$, and the Theorem is proven.

Let $k>2$. Denote the $(n-k+1)$-spaces trough $\pi$ different from $H_i$ by $K_j$, $j=1,\ldots, z$. There are at least $(q^{3k-3}-q^{3k-4}-2\sqrt{q}q^{3k-5}-1)/q^3$ different $(n-k+2)$-spaces $\langle H_1, H_j\rangle$, $1<j\leq s$. 
If all $(n-k+2)$-spaces $\langle H_1,H_j\rangle$, contain at least $2q^2$ of the spaces $K_i$, then $z\geq 2q^2(q^{3k-3}-q^{3k-4}-2\sqrt{q}q^{3k-5}-1)/q^3$, a contradiction if $q\geq 49$. Let $\langle H_1,H_2\rangle$ be an $(n-k+2)$-spaces containing less than $2q^2$ spaces $K_i$. 

Suppose, by induction, that for any $1<i<k$, there is an $(n-k+i)$-space $\langle H_1,H_2,\ldots, H_i\rangle$  containing at most $2q^{3i-4}$ of the spaces $K_i$, such that $\mathcal{B}(\langle \pi_1,\ldots,\pi_i\rangle)\subseteq B$. 

There are at least $\frac{q^{3k-3}-q^{3k-4}-2\sqrt{q}q^{3k-5}-(q^{3i}-1)/(q^3-1)}{q^{3i}}$ different $(n-k+i+1)$-spaces $\langle H_1,H_2,\ldots, H_i,H\rangle$, $H\not\subseteq \langle H_1,H_2,\ldots, H_i\rangle$. 

If all of these contain at least $2q^{3i-1}$ of the spaces $K_i$, then 
$$z\geq (2q^{3i-1}-2q^{3i-4})\frac{q^{3k-3}-q^{3k-4}-2\sqrt{q}q^{3k-5}-(q^{3i}-1)/(q^3-1)}{q^{3i}}+2q^{3i-4},$$ a contradiction if $q\geq 49$. Let $\langle H_1,\ldots, H_{i+1}\rangle$ be an $(n-k+i+1)$-space containing less than $2q^{3i-1}$ spaces $K_i$. We still need to prove that $\mathcal{B}(\pi_1,\ldots,\pi_{i+1})\subseteq B$.

 Since $\mathcal{B}(\langle \pi_{i+1}, \pi \rangle)\subseteq B$, with $\pi$ a plane in $ \langle \pi_1,\ldots,\pi_i\rangle$ for which $\mathcal{B}(\pi)$ is not contained in one of the spaces $K_i$, there are at most $2q^{3i-4}$ 4-dimensional spaces $\langle \pi_{i+1},\mu\rangle$ for which $\mathcal{B}(\langle\pi_{i+1},\mu\rangle)$ is not necessarily contained in $B$, giving rise to at most $2q^{3i-4}(q^6+q^4\sqrt{q})$ points $Q_i$ for which $\mathcal{B}(Q_i)$ is not necessarily in $B$.  Let $Q$ be a point of such a space $\langle\pi_{i+1},\mu\rangle$.
 
 There are $((q\sqrt{q})^{2i+2}-1)/(q\sqrt{q}-1)$ lines through $Q$ in $\langle \pi_1,\ldots, \pi_{i+1}\rangle \cong \PG(2i+2,q\sqrt{q})$, and there are at most $2q^{3i-4}(q^6+q^4\sqrt{q})$ points $Q_i$ for which $\mathcal{B}(Q_i)$ is not necessarily in $B$. Suppose all lines through $Q$ in $\langle \pi_1,\ldots, \pi_{i+1}\rangle \cong \PG(2i+2,q\sqrt{q})$ contain at least $q\sqrt{q}-q-\sqrt{q}$ points $Q_i$ for which $\mathcal{B}(Q_i)$ is not necessarily in $B$, then there are at least $(q\sqrt{q}-q-\sqrt{q}-1)((q\sqrt{q})^{2i+2}-1)/(q\sqrt{q}-1)+1>2q^{3i-4}(q^6+q^4\sqrt{q})$ points $Q_i$ for which $\mathcal{B}(Q_i)$ is not necessarily in $B$, a contradiction.

 Hence, there is a line $N$ through $Q$ in $\langle \pi_1,\ldots, \pi_{i+1}\rangle$ with at most $q\sqrt{q}-q-\sqrt{q}-1$ points $Q_i$ for which $\mathcal{B}(Q_i)$ is not necessarily contained in $B$, hence, for at least $q+\sqrt{q}+2$ points $R\in N$, $\mathcal{B}(R)\in B$. Result \ref{ba} yields that $\mathcal{B}(Q)\in B$.   This implies that $\mathcal{B}(\langle \pi_1,\ldots,\pi_{i+1}\rangle)\subseteq B$.
 
Hence, the space $\mathcal{B}(\langle H_1,H_2,\ldots, H_k\rangle)$ is such that $\mathcal{B}(\langle \pi_1,\ldots,\pi_k\rangle)\subseteq B$.  But $\mathcal{B}(\langle \pi_1,\ldots, \pi_k \rangle)$ corresponds to a linear $k$-blocking set $B'$ in $\PG(n,q^3)$. Since $B$ is minimal, $B=B'$.
\end{proof}


\end{document}